\theoremstyle{plain}
\newtheorem{theorem}{Theorem}[section]
\newtheorem*{theorem*}{Theorem}
\newtheorem{prop}[theorem]{Proposition}
\newtheorem{lemma}[theorem]{Lemma}
\newtheorem{cor}[theorem]{Corollary}
\newtheorem{comp}[theorem]{Computation}
\newtheorem{theoremintro}{Theorem}
\theoremstyle{definition}
\newtheorem{defin}[theorem]{Definition}
\newtheorem{prob}[theorem]{Problem}
\theoremstyle{remark}
\newtheorem{remark}[theorem]{Remark}
\newcommand{\CH}{\mathrm{CH}}
\newcommand{\isom}{\cong}
\DeclareMathOperator{\rk}{rk}
\renewcommand{\P}{\mathbb{P}}
\newcommand{\Z}{\mathbb{Z}}
\newcommand{\Q}{\mathbb{Q}}
\newcommand{\C}{\mathbb{C}}
\newcommand{\F}{\mathbb{F}}
\newcommand{\CC}{\CC}
\newcommand{\calE}{\mathcal{E}}
\newcommand{\sheaf}[1]{\mathscr{#1}}
\newcommand{\OO}{\sheaf{O}}
\newcommand{\category}[1]{\mathsf{#1}}
\newcommand{\Db}{\category{D}^\textrm{b}}
\renewcommand{\AA}{\category{A}}
\renewcommand{\CC}{\category{C}}
\newcommand{\linedef}[1]{\textsf{#1}}
\mathchardef\mathcomma=\mathcode`,
\begin{document}
\author{Asher Auel}
\author{Jack Petok}

\title{Zeta functions of K3 categories over finite fields}

\address{ Department of Mathematics, 
Dartmouth College, Hanover, New Hampshire \newline \indent \texttt{\it \indent E-mail
address: \tt asher.auel@dartmouth.edu}}~
\address{
Department of Mathematics, 
Colby College, Waterville, Maine \newline \indent \texttt{\it \indent E-mail
address: \tt jpetok@colby.edu}}

\begin{abstract}
We study the arithmetic of the K3 category associated to a cubic
fourfold over a non-algebraically closed field $k$, specifically, the
Galois representation on its $\ell$-adic Mukai realization. For $k$ a
finite field, we define the zeta function of a general K3 category, an
invariant under Fourier--Mukai equivalence that can be used to study
its geometricity.  

We show both how the zeta function can obstruct the geometricity of a
K3 category, as well as fail to detect nongeometricity.

Finally, we study an analogue of Honda--Tate for K3 surfaces and for
K3 categories, and provide a nontrivial restriction on the possible Weil
polynomials of the K3 category of a cubic fourfold.

\end{abstract}

\maketitle
\section*{Introduction}
Given a smooth cubic fourfold \(X \subset \P^5\) over a field $k$,
Kuznetsov~\cite{kuznetsov} has established a semiorthogonal
decomposition of its derived category
\[
\Db(X) = \langle \AA_X, \OO, \OO(1), \OO(2)\rangle.
\]
The admissible subcategory $\AA_X$ is known as the \linedef{K3
  category} of the cubic fourfold, cf.\ \cite{huybrechts}, in light of
Kuznetsov's proof that $\AA_X$ is a Calabi--Yau category of dimension $2$ in the
sense of Kontsevich~\cite{kontsevich:CY} and has the same Hochschild
cohomology as a K3 surface.  Such categories are called
\linedef{noncommutative K3 surfaces}, see
\cite[Section~2.2]{macri_stellari:survey}.  We say that $\AA_X$ is
\linedef{geometric over \(k\)} if there is a $k$-linear equivalence
between \(\AA_X\) and $\Db(S)$ for a K3 surface $S$ defined over $k$.
When $k=\C$, Kuznetsov conjectured that $X$ is rational if and only if
$\AA_X$ is geometric.  As evidence for his conjecture, Kuznetsov
checked it for the known families of rational cubic fourfolds, and
more recent work, e.g., \cite{AHTVA}, \cite{addington_thomas},
\cite{ABBVA}, \cite{BLMNPS}, has established additional cases and
shown the conjecture to be equivalent to a Hodge-theoretic
characterization of rationality for cubic fourfolds advocated by
Hassett and Harris.

In this note, we introduce point counting on \(X\) 
as a tool to study the geometricity of the K3 category
\(\AA_X\). Specifically, for $X$ defined over a finite field $k$, we
define the notion of \linedef{point count} \(|\AA_X(k)|\) of the
K3-category $\AA_X$, which is an integer that is a derived invariant
of $\AA_X$ under Fourier--Mukai equivalence.

In the geometric case, when

$\AA_X \cong \Db(S)$ for a K3 surface $S$ defined over $k$, we recover
the classical point count \(|\AA_X(k)| = |S(k)|\) of $S$.  But in
general, the point count of $\AA_X$ may be negative or fail to satisfy
other necessary growth conditions on point counts of varieties.

More generally, we define point counts and zeta functions for any
noncommutative surface, see~Section~\ref{sec:defs}.  Our main motivation is
that point counts can obstruct geometricity of \(\AA_X\). Indeed, when
$X$ is a cubic fourfold over $\Q$ with good reduction at $p$, such
that $\AA_{X_{\F_p}}$ is not geometric over $\F_p$, then \(\AA_X\) is
not geometric over \(\Z_p\), i.e., there is no $\Z_p$-linear
equivalence $\AA_{\mathcal{X}} \cong \Db(\mathcal{S})$ for smooth
proper models $\mathcal{X}$ of $X$ and $\mathcal{S}$ of $S$ over
$\Z_p$.  In this setting, it is expected that any
Fourier--Mukai equivalence $\AA_X \cong \Db(S)$ over $\Q$ spreads to a
Fourier--Mukai equivalence over $\Z_p$ as long as $S$ has good
reduction at $p$, see \cite[Desideratum 3.4.4]{magni:thesis}.  This
would imply that whenever $\AA_{X_{\F_p}}$ is not geometric over
$\F_p$, e.g., has negative point counts, then $X$ does not admit any
associated K3 surface over $\Q$ with good reduction at $p$.

Using the census of cubic fourfolds over $\F_2$ obtained in
\cite{cubiccensus}, we can give a lower bound on the number of
isomorphism classes of cubic fourfolds $X$ over $\F_2$ whose K3
category is not geometric, either because they have negative point counts
(i.e., $|\AA_X(\F_{2^n})| < 0$ for some $n \geq 1$) or because they fail
field extension growth conditions (i.e., $|\AA_X(\F_{2^{mn}})| <
|\AA_X(\F_{2^n})|$ for some $m,n \geq 1$).

\begin{theoremintro}
Of the $1\, 069\, 562$ isomorphism classes of smooth cubic fourfolds
over~$\F_2$, we find that:
\begin{enumerate}
\item[(a)] $2662$ have K3 categories with negative point counts; of
  these, $436$ are ordinary and exactly one is Noether--Lefschetz general.

\item[(b)] $2343$ have K3 categories with nonnegative point counts but
  fail the field extension growth condition; of these, $1084$ are
  ordinary and $140$ are Noether--Lefschetz general. 
\end{enumerate}
In particular, only \(0.47\% \) of smooth cubic fourfolds over
\(\F_2\) have nongeometric K3 category because they have either
negative point counts or fail field extension growth.
\end{theoremintro}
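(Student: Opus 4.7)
The plan is to carry out a systematic computation over the census of smooth cubic fourfolds over $\F_2$ produced in \cite{cubiccensus}, applying the definitions of Section~\ref{sec:defs} to extract point counts of each K3 category and then test the two conditions.

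The first step is to relate $|\AA_X(\F_{2^n})|$ to $|X(\F_{2^n})|$. Kuznetsov's semiorthogonal decomposition $\Db(X) = \langle \AA_X, \OO, \OO(1), \OO(2)\rangle$ combined with additivity of point counts on semiorthogonal components (Section~\ref{sec:defs}) gives an explicit linear formula expressing $|\AA_X(\F_{2^n})|$ as $|X(\F_{2^n})|$ minus a fixed polynomial in $q = 2^n$ accounting for the three exceptional line bundles. For each of the $1\,069\,562$ isomorphism classes in the census, I would use the tabulated $|X(\F_{2^n})|$ for sufficiently many $n$ to recover, via the functional equation on the $24$-dimensional Mukai realization, the full Weil polynomial of $\AA_X$.

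With the Weil polynomials in hand, both statements are checked directly: condition~(a) enumerates those classes where $|\AA_X(\F_{2^n})| < 0$ for some $n$, and condition~(b), applied to the complementary set, enumerates those where $|\AA_X(\F_{2^{mn}})| < |\AA_X(\F_{2^n})|$ for some $m, n \geq 1$. For the sub-counts, ordinariness of $\AA_X$ is detected from the Newton polygon of its Weil polynomial, and Noether--Lefschetz generality amounts to requiring that the Tate-algebraic rank of the Mukai lattice be minimal, i.e., that no eigenvalue of Frobenius equals $q$ times a root of unity. Each of these is a transparent test on the coefficients and roots of the Weil polynomial.

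The main obstacle is computational scale rather than conceptual: a million hypersurfaces times several extension degrees is substantial, but the input data is already assembled in \cite{cubiccensus}. The more delicate point is justifying that a bounded range of $n$ suffices for the monotonicity test in condition~(b) --- a priori a failure could occur at arbitrarily large $n$ --- but the Weil bounds on $|\AA_X(\F_{2^n})|$ force the dominant asymptotic $|\AA_X(\F_{2^n})| \sim 2^{2n}$, so once $n$ is past the threshold at which this top term exceeds the aggregate of the lower-order Weil contributions, monotonicity along all further extensions is automatic. This reduces the claim to a finite, effectively bounded check, which can then be executed on the census data to produce the stated counts.
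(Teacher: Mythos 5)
Your plan is essentially the paper's own: the theorem is a summary of the computer computations presented in Section~\ref{sec:cubics} (Computations~\ref{comp:F2negptcounts}, \ref{negptcountsonvgeneral}, the one following, and their remarks), which run over the census of \cite{cubiccensus} and test exactly the two conditions you describe, using the Tate conjecture (known for cubic fourfolds over $\F_2$) to read off Noether--Lefschetz generality and the Newton polygon to read off ordinariness. A couple of imprecisions in your sketch are worth flagging. First, the passage from $|X|$ to $|\AA_X|$ is not a bare subtraction: the correct relation, which the paper records as \eqref{eq:cubic_fourfold_point_counts}, is
\[
|\AA_X(\F_{q^n})| = \frac{|X(\F_{q^n})| - 1 - q^{2n} - q^{4n}}{q^n},
\]
so there is a division by $q^n$ coming from the Tate twist in the definition of $Z_\CC$; a pure subtraction of a polynomial in $q^n$ would give the wrong answer. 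Second, your characterization of Noether--Lefschetz generality is off by a normalization: on $H^4_{\text{\'et}}(\overline X, \Q_\ell)$ (untwisted) the Frobenius eigenvalues have absolute value $q^2$, so the condition is that no primitive eigenvalue be $q^2$ times a root of unity, equivalently that $L_{4,pr}(T)$ have no cyclotomic factor; ``$q$ times a root of unity'' is not the right scaling. Third, your bounded-check argument for condition~(b) is sound in spirit --- the Weil bound $\bigl||\AA_X(\F_{2^n})| - 1 - 2^{2n}\bigr| \leq 22\cdot 2^n$ does reduce monotonicity to a finite list of pairs $(n,m)$ --- but as stated it only controls large $n$; you also need that for each small $n$, all but finitely many $m$ are automatic, which follows by the same domination once $2^{mn}$ is large. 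The paper handles this by appealing to the analogous bound in \cite{kedlaya_sutherland-census}, where a concrete threshold is worked out. With these corrections, your outline reproduces the paper's computation.
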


 \noindent Addington informed us that he already observed the
existence of K3 categories with negative point counts in the course of
computer experiments for \cite{addington_auel}.

On the other hand, we expect that point counting alone can sometimes fail
to obstruct geometricity. As evidence, we show the following (see Theorem~\ref{thm:specialexample}).

\begin{theoremintro}\label{thm:thespecialexample}
There exist special cubic fourfolds $X$ over $\Q$ such that:
\begin{itemize}
\item $X$ has good reduction at $2$ and $\AA_{X_{\F_2}}$ has all
positive point counts with field extension growth conditions, and

\item $\AA_{X_\C}$ is not equivalent to $\Db(S,\alpha)$ for any K3
surface $S$ defined over $\C$ and any Brauer class $\alpha \in \mathrm{Br}(S)$.
\end{itemize}
\end{theoremintro}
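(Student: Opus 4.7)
The plan is to produce an explicit $X$ with both properties by combining the census of cubic fourfolds over $\F_2$ from Theorem~A with a lattice-theoretic obstruction over $\C$.

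For the obstruction over $\C$ (the second bullet), I would invoke Huybrechts' criterion that $\AA_{X_\C}$ is equivalent to $\Db(S,\alpha)$ for some twisted K3 surface $(S,\alpha)$ if and only if the Mukai lattice $\wtilde H(\AA_{X_\C},\Z)$ contains a primitive isotropic class of type $(1,1)$. It thus suffices to exhibit a Hassett-special cubic $X$ whose algebraic $(1,1)$-part of $\wtilde H$ is positive definite and anisotropic. Concretely, one takes $X$ on a Hassett divisor $\mathcal{C}_d$ (or an intersection of two such) where $d$ violates the weaker twisted analogue of Hassett's condition $(\ast\ast)$, so that no isotropic class arises for the generic $X \in \mathcal{C}_d$ with the prescribed algebraic lattice.

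For the arithmetic condition at $p=2$ (the first bullet), I would search the census \cite{cubiccensus} for $\overline X/\F_2$ with all positive point counts satisfying the field extension growth property; by Theorem~A these comprise roughly $99.5\%$ of smooth cubic fourfolds over $\F_2$, so candidates are abundant. Because the zeta function and point counts of the K3 category are derived invariants depending only on the reduction, any smooth lift of $\overline X$ to $\Z_{(2)}$ inherits these properties automatically.

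The main obstacle is to realize both constraints on a single $X$. My plan is to use Noether--Lefschetz density: the locus of cubics in $\mathcal{C}_d$ is a countable union of divisors in the moduli space of cubic fourfolds, and by $p$-adic density combined with smoothness of the moduli stack one can deform a chosen smooth lift of $\overline X$ over $\Z_{(2)}$ into one whose complex fibre is special of the prescribed discriminant. Alternatively, one may write down an explicit cubic equation over $\Z$ containing a prescribed algebraic $2$-cycle (a plane, a quadric surface, or a cubic scroll swept by specified lines), verify smoothness of its mod-$2$ reduction, locate the reduction in the census, and directly read off its zeta function to confirm positivity and growth, while separately computing the $(1,1)$-lattice of $X_\C$ to rule out any isotropic class. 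Either route produces the desired $X$.
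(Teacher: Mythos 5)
Your second route (write down an explicit cubic over $\Z$ containing prescribed algebraic surfaces, verify smoothness mod $2$, read off its zeta function from the census, and rule out isotropic $(1,1)$-classes over $\C$) is essentially the approach the paper takes: the paper constructs an explicit $X/\Z$ containing a Veronese surface $V$ and a cubic scroll $T$ with $T.V = 2$, whose reduction mod $2$ was located in the census of \cite{cubiccensus}. However, there are two substantive gaps in the way you have sketched it.

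First, a rank-$2$ algebraic lattice (i.e.\ a single Hassett divisor) is not sufficient to rule out \emph{twisted} associated K3s. For instance, a cubic containing a plane lies on $\mathcal C_8$, and while $d=8$ is not admissible, it is twisted admissible ($8 = 2^2\cdot 2$ and $d_0=2$ satisfies $(\star\star)$), so such a cubic does have a twisted associated K3. The paper must therefore pass to rank $3$ — an intersection of two Hassett divisors with a carefully chosen intersection number $T.V$ — and then verify, via the explicit Gram matrix, that \emph{no} rank-$2$ primitive sublattice containing $h^2$ has twisted admissible discriminant (Proposition~\ref{prop:noadmissiblemarkings}). Your sketch mentions a possible intersection of two divisors, but does not explain why rank $2$ fails or why the specific rank-$3$ configuration suffices.

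Second, and more critically, you say you would ``separately compute the $(1,1)$-lattice of $X_\C$'', but give no method for bounding $\rk \CH^2(X_\C)$ from above. Writing down generators of a rank-$3$ lattice of algebraic cycles over $\C$ only shows $\rk \CH^2(X_\C)\geq 3$; if the rank were strictly larger there could be additional (twisted) admissible markings you have not accounted for. The paper's mechanism for this is precisely the arithmetic at $p=2$: compute the Weil polynomial of $X_{\F_2}$, apply the Tate conjecture for cubic fourfolds over $\F_2$ to conclude $\rk \CH^2(X_{\overline{\F}_2}) = 3$, and then use the specialization inequality $\rk \CH^2(X_{\overline{\F}_2}) \geq \rk \CH^2(X_{\overline{\Q}})$ together with rigidity to pin down $\rk \CH^2(X_\C) = 3$. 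This is not an optional verification step; it is the load-bearing link between your two separate bullet points. Your alternative Noether--Lefschetz-density deformation route does not address this either, and has the further unaddressed difficulty that NL density arguments operate in the complex (or archimedean) topology and do not obviously let you preserve a fixed reduction mod $2$ while deforming into a prescribed Hassett locus.
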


\noindent In addition, about \(99.87\%\) of Noether--Lefschetz general
cubic fourfold over \(\F_2\) satisfy the conditions in
Theorem~\ref{thm:thespecialexample} (see~\cite[Theorem
1.4]{huybrechts}). The existence of special such cubic fourfolds
indicate that a potential Honda--Tate theory for K3 surfaces (see
Section~\ref{sec:Honda-Tate}) is still quite mysterious, as our current
necessary conditions (e.g., in \cite{kedlaya_sutherland-census}) on
the zeta function of a K3 surface hold for such potentially
noncommutative examples.

Our notion of point count of a K3 category may have additional
applications in the formulation of a Honda-Tate theory for
noncommutative K3 surfaces. As a demonstration of the potential
applications, we first note that if \(\AA_X\) is the K3 category of a
cubic fourfold \(X\), then the categorical Hilbert square
\(\AA_X^{[2]}\) has point-counts arising from geometry: there is an
equality of zeta functions of $\AA_X^{[2]}$ and of the Fano variety
$F(X)$ of lines of a cubic fourfold $X$ over a finite field. We
provide a proof of this (see Proposition~\ref{prop:noncommhilb}),
though it is also deducible from other work (see Remark~\ref{rem:Hilbert_square}).
The geometricity of the Hilbert square gives a nontrivial condition on
whether a zeta function can come from the K3 category of a cubic
fourfold. Indeed, of the \(2\, 971\, 182\) polynomials in~\cite[Computation
3a]{kedlaya_sutherland-census} that are potentially the Weil
polynomial of a noncommutative K3 surface over \(\F_2\), there are
$31\, 256$ that cannot arise from a cubic fourfold because of failure of
field-extension growth of their Hilbert square point counts. See
Section~\ref{sec:Honda-Tate} for a more detailed discussion of
this Honda-Tate theory.

Several authors have considered derived categories and point counts
over finite fields. In~\cite[Conjecture~1]{orlov}, Orlov conjectured
that derived equivalent varieties have the same effective Chow motive,
and hence should have the same point counts. For any variety with
ample or anti-ample canonical class, derived equivalence implies
isomorphism~\cite{BO}, hence the zeta function is a derived
invariant. Several other cases of derived invariance of point counts
have been established: by Antieau, Krashen, and Ward~\cite{akw} for
curves of genus 1, by Lieblich and Olsson~\cite{lieblich_olsson} for
K3 surfaces, and by Honigs~\cite{ honigs1, honigs2} for all abelian
varieties and all smooth projective surfaces and threefolds. This
derived invariance of point counts is especially surprising for
threefolds, since the Hodge numbers of a threefold are not a derived
invariant in positive characteristic~\cite{addington-bragg}.

The key to proving the derived invariance of point counts in these
known cases is to consider the action of Frobenius on the
\(\ell\)-adic Mukai realization \(\bigoplus_i
H^{i}_{\text{\'et}}(\overline{X}, \Q_\ell(\lceil \frac{i}{2} \rceil))
\), a Galois module that only depends on the derived equivalence class
of \(X\). More specifically, let \(k\) be a finite field with \(q\)
elements, let \(X\) be a smooth projective \(k\)-variety, and let
\(\overline{X} =X_{\overline{k}}\). Let \(F \colon \overline{X} \to
\overline{X}\) denote the relative \(q\)-power Frobenius.  Since \(X\)
is smooth and projective, the action of \(F^*\) on \'etale cohomology
satisfies the Grothendieck--Lefschetz trace formula
\[
|X(\F_q)| = \sum_{i=0}^{2\dim X} (-1)^i \mathrm{tr} (F^*\mid
H^i_{\text{\'et}}(\overline{X}, \Q_\ell)).
\]
The cohomology of low dimensional varieties is simple enough that one
can extract, from the action of \(F^*\) on the $\ell$-adic
Mukai realization, just enough information about the eigenvalues of
Frobenius on the individual \(H^i\) to conclude derived invariance of
point counts. Our definition of the point counts of a K3 category is
inspired by this idea.

In this note, we first introduce the notion of point counts of a
Calabi--Yau category of dimension $2$ in Section~\ref{sec:defs}. We
prove some basic properties, including Fourier--Mukai invariance. Then, in
Section~\ref{sec:cubics}, we apply our notion to the study of cubic
fourfolds and their associated noncommutative K3s. We give examples of
noncommutative K3s defined over \(\F_2\) with negative point counts,
which is an obstruction to geometricity of the K3 category. We also
consider the relationship between the Fano variety of lines on the
cubic and the Hilbert square of its K3 category, finding
that these point counts will always agree. Finally, in
Section~\ref{sec:nonadmissible}, we give an example of a nonadmissible
special cubic fourfold over \(\Q\) whose associated noncommutative K3, when reduced modulo 2, has the zeta function of a usual K3 surface. Our example illustrates that the
zeta function is too coarse as invariant to distinguish admissible and
nonadmissible cubics.

\medskip
The authors wish to thank Nick Addington, Sarah Frei, Brendan Hassett, Richard Haburcak, Daniel Huybrechts, Bruno Kahn, Alex Perry, Laura Pertusi, Franco Rota, John Voight, and Xiaolei Zhao. The first author received partial support from Simons Foundation grant 712097, National Science Foundation grant DMS-2200845, and a Walter and Constance Burke Award and a Senior Faculty Grant from Dartmouth College. Part of this work was completed while the second author was a guest researcher at the Junior Trimester Program in Algebraic Geometry at the Hausdorff Institute for Mathematics, funded by the Deutsche Forschungsgemeinschaft (DFG, German Research Foundation) under Germany's Excellence Strategy---EXC-2047/1---390685813.

\section{Galois modules from admissible subcategories}\label{sec:defs}

Let \(X\) be a smooth projective variety defined over a perfect field \(k\) and \(\ell\) a prime not equal to the characteristic of \(k\). Let \(\iota_\CC \colon \CC \hookrightarrow \Db(X)\) denote the embedding of some admissible subcategory of the bounded derived category of \(X\),  and denote by \(\pi_{\CC} \colon \Db(X) \to \CC\) the left adjoint of \(\iota_\CC\). Suppose further that this functor \(\iota_\CC\) is \(k\)-linear. From the derived category of \(X\), one can recover the even and odd Mukai structures:

\[\widetilde{H}^{\text{even}} (X):= \bigoplus_i H^{2i}_{\text{\'et}}(\overline{X}, \Q_\ell(i)), \ \widetilde{H}^{\text{odd}}(X) :=  \bigoplus_i H^{2i-1}_{\text{\'et}}(\overline{X}, \Q_\ell(i)) \]
and the total Mukai structure
\[\widetilde{H}(X) := \widetilde{H}^{\text{even}} (X) \oplus \widetilde{H}^{\text{odd}}(X) .\]

\begin{defin} 
We define the \linedef{\(\ell\)-adic Galois module} of a \(k\)-linear admissible embedding of a subcategory \(\CC \hookrightarrow \Db(X)\)  to be the \(G_k\)-submodule of \(\widetilde{H}(X)\) which is the image of the cohomological Fourier--Mukai transform \((\iota_\CC \circ \pi_{\CC})^H \colon \widetilde{H}(X) \to \widetilde{H}(X)\). We denote this submodule by  \(\widetilde{H}(\CC)\).

 \end{defin}

\begin{remark}
We have defined \(\widetilde{H}(C)\) as a \(\Q_\ell\)-vector space; we do not concern ourselves with defining an underlying integral structure in the present work.
\end{remark}

\begin{remark}
Note that the above definition depends {\it a priori} on the embedding
\(\CC \hookrightarrow \Db(X)\). We will often fix an embedding
\(\CC \hookrightarrow \Db(X)\) and refer to
\(\widetilde{H}(\CC)\) as simply the Galois module associated to
\(\CC\), with the embedding implicitly understood. However, we will
soon see that \(\widetilde{H}(\CC)\) only depends on the embedding
\(\CC \to \Db(X)\) up to Fourier--Mukai equivalence of \(\CC\).
The $\ell$-adic realization functor on noncommutative
motives, see e.g., \cite[Section~3.7]{blanc:realizations}, should
provide an alternative way of constructing \(\widetilde{H}(\CC)\)
independent of the embedding.  
\end{remark}

\begin{defin}
Given a \(\Q_\ell\) vector space \(V\) with an action of a linear operator \(\varphi\), we define the associated \linedef{\(L\)-polynomial} by
\[L(T; \varphi) := \det(\mathrm{Id}- \varphi T)  .\]
Given an admissible embedding \(\CC  \hookrightarrow \Db(X)\), we define the \(L\)-polynomial 
\[ L_{\widetilde{H}(\CC)}(T) := L(T ; F^*).\]
associated to the action of \(F^*\) on \(\widetilde{H}(\CC)\).

\end{defin}

\subsection{Point counts on a noncommutative K3 surface}
\label{subsec:noncommutative_surface}
If one views \(\CC\) as a noncommutative variety, is there a meaningful definition for \(\CC(\F_q)\) or \(|\CC(\F_q)|\)? Unfortunately, there is not in general an obvious way to compute Lefschetz traces from \(\widetilde{H}(\CC)\), because the cohomological Fourier--Mukai transform in general does not preserve cohomological degree. We still manage to define a notion of ``point count'' below, and this notion will agree with classical point counts when \(\CC\) is the bounded derived category of a K3 surface.

\medskip

For the rest of this section, let \(k\) be a field with \(q\) elements
and \(X\) a smooth projective variety over \(k\). Let
\( \CC \hookrightarrow \Db(X)\) be an admissible subcategory defined
over a finite field \(k\). Assume \(\CC\) is a Calabi--Yau category of
dimension $2$ (so the shift $[2]$ is a Serre functor) with the same Hochschild
cohomology as the bounded derived category of a K3 surface, what we
often call a \linedef{noncommutative K3 surface}, see
\cite[Section~2.2]{macri_stellari:survey}.  

\begin{defin}
The \linedef{zeta function} of a noncommutative K3 surface \(\CC\), denoted \(Z_\CC(T)\), is the rational function 

\[ Z_\CC(T) :=  \frac{(1-qT)^2 }{(1-T) \cdot L_{\widetilde{H}(\CC)}(qT) \cdot (1-q^2T)}.\] 

For \(K \supseteq k\) a field extension of degree \(n\), define the \linedef{\(K\)-point count of \(\CC\)} by
 \[|\CC(K)| := na_n, \]
  where \(a_n\) is the coefficient of \(T^n\) in the formal series expansion of \(\log(Z_{\CC}(T))\).
 
\end{defin}
These point counts are rational numbers, but are not necessarily integral or positive.

\begin{remark}
If we further assume that $\CC$ is an \linedef{$\ell$-adic
  noncommutative K3 surface}, i.e., $\widetilde{H}(\CC)$ admits a decomposition
$H^0 \oplus H^2 \oplus H^4$, where $H^0 = H^4 =\Q_\ell$ have trivial
Galois action, then \(L_{\widetilde{H}(\CC)}(T)\) has two factors of $(1-T)$ corresponding
to $H^0$ and $H^4$, and $L_\CC(T) := L_{\widetilde{H}(\CC)}/(1-T)^2$ is a
polynomial that corresponds
to the $H^2$.  This is the case for the K3 category $\AA_X$ of a cubic
fourfold $X$ since $\AA_X$ is the semiorthogonal complement of an
exceptional collection, and we wonder whether it holds for every
noncommutative K3 surface. 
\end{remark}

 \subsection{FM-invariance}
By a \linedef{Fourier--Mukai equivalence} between admissible subcategories \(\CC \hookrightarrow \Db(X)\) and \(\CC' \hookrightarrow \Db(X')\), we mean an exact equivalence \(\CC \cong \CC'\) which appears in the factorization of a Fourier--Mukai transform between \(X\) and \(X'\):
 \[\Db(X) \xrightarrow{\pi_C} \CC \xrightarrow{\sim} \CC' \xrightarrow{\iota_{C'}} \Db(X').\]
Our definition of point count should be invariant under such
equivalences. This would be automatic if \(\CC \to \CC'\) were
induced from an equivalence \(\Db(X) \to \Db(X')\), but in general
there can be  equivalences of admissible subcategories that don't
arise from an equivalence of their ambient derived categories. Still,
we can show that {\it Fourier--Mukai} equivalences induce isomorphisms  \(\widetilde{H}(\CC_X) \to \widetilde{H}(\CC_{X'})\).

\begin{prop} \label{prop:FMinvariance}
If  \(\CC \to \CC'\) is a \(k\)-linear Fourier--Mukai equivalence, induced by some kernel \(\calE \in \Db(X\times X')\) defined over \(k\), then \(\widetilde{H}(\CC) \to \widetilde{H}(\CC') \) is an isomorphism of Galois modules. 
\end{prop}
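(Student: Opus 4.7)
The strategy is to exhibit an explicit inverse to $\Phi_\calE^H$ restricted to $\widetilde{H}(\CC)$ by passing to the right adjoint of $\Phi_\calE$, which is automatically a Fourier--Mukai transform with a $k$-defined kernel, and then to show that the two compositions recover the projection idempotents whose cohomological images are exactly $\widetilde{H}(\CC)$ and $\widetilde{H}(\CC')$.

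First I would record the categorical fact underlying the definition. Because $\pi_\CC$ is left adjoint to the fully faithful $\iota_\CC$, the counit is an isomorphism $\pi_\CC \circ \iota_\CC \cong \mathrm{Id}_\CC$, so $\iota_\CC \circ \pi_\CC$ is idempotent; its cohomological Fourier--Mukai transform $(\iota_\CC \circ \pi_\CC)^H$ is thus an idempotent on $\widetilde{H}(X)$ whose image is $\widetilde{H}(\CC)$ by definition, and in particular it restricts to the identity on $\widetilde{H}(\CC)$. Since $\CC'$ is admissible, $\iota_{\CC'}$ also has a right adjoint $\pi_{\CC'}^R$, and the same argument shows that $(\iota_{\CC'} \circ \pi_{\CC'}^R)^H$ is an idempotent on $\widetilde{H}(X')$ with image $\widetilde{H}(\CC')$.

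Next I would construct the inverse kernel by taking the right adjoint of $\Phi_\calE$. It is standard that the right adjoint of a Fourier--Mukai transform with kernel $\calE$ is a Fourier--Mukai transform with a kernel $\calE^R$ obtained by dualizing $\calE$ and twisting by a canonical bundle, a construction that preserves field of definition, so $\calE^R$ is defined over $k$. Computing the right adjoint of $\iota_{\CC'} \circ \Phi \circ \pi_\CC$ factor by factor, using that the right adjoint of $\Phi$ is $\Phi^{-1}$ and that $\iota_\CC$ is the right adjoint of $\pi_\CC$, yields $\iota_\CC \circ \Phi^{-1} \circ \pi_{\CC'}^R$. The two compositions then collapse, via $\pi_{\CC'}^R \circ \iota_{\CC'} \cong \mathrm{Id}_{\CC'}$ and $\pi_\CC \circ \iota_\CC \cong \mathrm{Id}_\CC$, to
\[
\Phi_{\calE^R} \circ \Phi_\calE \cong \iota_\CC \circ \pi_\CC,
\qquad
\Phi_\calE \circ \Phi_{\calE^R} \cong \iota_{\CC'} \circ \pi_{\CC'}^R.
\]

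Passing to cohomological Fourier--Mukai transforms and using the first paragraph, both $\Phi_{\calE^R}^H \circ \Phi_\calE^H$ and $\Phi_\calE^H \circ \Phi_{\calE^R}^H$ are the projection idempotents onto $\widetilde{H}(\CC)$ and $\widetilde{H}(\CC')$, hence restrict to the identity on those subspaces. Since $\Phi_\calE^H$ factors through $\iota_{\CC'}^H$ it carries $\widetilde{H}(\CC)$ into $\widetilde{H}(\CC')$, so $\Phi_\calE^H|_{\widetilde{H}(\CC)}$ and $\Phi_{\calE^R}^H|_{\widetilde{H}(\CC')}$ are mutually inverse; Galois equivariance is automatic from $k$-definedness of both kernels. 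The only step requiring care is the fact that passing to the right adjoint of a Fourier--Mukai transform yields a Fourier--Mukai transform whose kernel is again defined over $k$---this is exactly where the hypothesis that $\Phi$ is induced by a $k$-defined kernel $\calE$ (rather than being merely abstractly $k$-linear) is genuinely used.
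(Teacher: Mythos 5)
Your proposal takes essentially the same route as the paper: construct the inverse via the right-adjoint kernel $\calE^R$ (defined over $k$, hence Frobenius-compatible), factor the two compositions into the projection idempotents, and conclude that the restricted cohomological transform is a Galois-equivariant isomorphism. Your write-up is in fact more explicit than the paper's at the step showing that $\Phi_\calE^H$ carries $\widetilde{H}(\CC)$ into $\widetilde{H}(\CC')$.

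One small wrinkle to close: you assert that $(\iota_{\CC'} \circ \pi_{\CC'}^R)^H$ has image $\widetilde{H}(\CC')$ ``by the same argument,'' but the paper defines $\widetilde{H}(\CC')$ using the \emph{left} adjoint $\pi_{\CC'}$, so that its image is a priori that of $(\iota_{\CC'} \circ \pi_{\CC'})^H$. The two idempotents do have the same image --- since $\pi_{\CC'}^R \circ \iota_{\CC'} \cong \mathrm{Id} \cong \pi_{\CC'} \circ \iota_{\CC'}$, each idempotent fixes the other's image pointwise, giving mutual containment --- but that is a separate observation, not the same argument that shows idempotence. Alternatively, you can avoid the issue entirely (as the paper implicitly does) by only using the left-adjoint projection $\iota_\CC \circ \pi_\CC$ on the source side and noting that $\Phi_\calE^H$ is injective on its image with a left inverse $\Phi_{\calE^R}^H$ that also lands in $\widetilde{H}(\CC)$.
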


\begin{proof}
The FM-equivalence \(F \colon \CC \to \CC' \) fits into the sequence of exact functors  
\[\Db(X) \xrightarrow{\pi_X} \CC \xrightarrow{F} \CC' \xrightarrow{\iota_{X'}} \Db(X')\] whose overall composition is Fourier--Mukai.
Let \(\calE \in \Db(X \times X')\) be the kernel of the composition, inducing the exact functor \(\Phi_E\colon  \Db(X) \to \Db(X')\), which by hypothesis is defined over \(k\). Then the right adjoint functor \( \Phi_{\calE_R} \colon \Db(X) \to \Db(X')\) induces an inverse \(k\)-linear equivalence \(\CC' \to \CC\). The Fourier--Mukai kernels induce maps \(\Phi^H_\calE \colon \widetilde{H}(X) \to \widetilde{H}(X')\) and \(\Phi^H_{\calE_R} \colon \widetilde{H}(X') \to \widetilde{H}(X)\); both of these kernels are defined over \(k\) and are therefore compatible with the action of Frobenius, in the sense that \(F^* \calE = \calE\). Since \(\Phi_{\calE} \circ \Phi_{\calE_R}\) restricted to \(\CC\) is naturally isomorphic to the identity on \(\CC\), then we have that the cohomological transform \(\Phi^H_\calE \circ \Phi^H_{\calE_R} \colon \widetilde{H}(X) \to \widetilde{H}(X)\) is compatible with Froebnius and acts as the identity on \(\widetilde{H}(\CC)\), and thus \(\Phi^H_{\calE}\) restricted to \(\widetilde{H}(\CC)\) yields an isomorphism of Galois modules \(\widetilde{H}(\CC) \to \widetilde{H}(\CC')\).
\end{proof}

Since our definition of \(|\CC(\F_q)|\) depends only on the Galois module \(\widetilde{H}(\CC) \), we conclude the FM-invariance of point counts.

\begin{cor}
Let \(k\) be a finite field. If there is \(k\)-linear exact equivalence \(\CC \to \CC'\) which is Fourier--Mukai, then \(Z_{\CC} = Z_{\CC'}\) and \(|\CC(K)| = |\CC'(K)|\) for any finite extension \(K \supseteq k\).
\end{cor}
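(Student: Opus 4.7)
The plan is to derive this corollary as an immediate consequence of Proposition~\ref{prop:FMinvariance}, since both invariants in the statement are defined purely in terms of the Galois module \(\widetilde{H}(\CC)\) together with the scalar \(q = |k|\).

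First, I would invoke Proposition~\ref{prop:FMinvariance} to obtain a \(G_k\)-equivariant isomorphism \(\widetilde{H}(\CC) \xrightarrow{\sim} \widetilde{H}(\CC')\). Because this isomorphism intertwines the two Frobenius actions, the characteristic polynomials of \(F^\ast\) on the two modules coincide, so \(L_{\widetilde{H}(\CC)}(T) = L_{\widetilde{H}(\CC')}(T)\). Plugging this equality into the definition
\[
Z_\CC(T) = \frac{(1-qT)^2}{(1-T)\cdot L_{\widetilde{H}(\CC)}(qT)\cdot(1-q^2 T)}
\]
and its analogue for \(\CC'\) yields \(Z_\CC(T) = Z_{\CC'}(T)\) at once, since the remaining factors depend only on \(q\).

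For the point counts, I would then note that \(|\CC(K)|\) was defined to be \(n\) times the coefficient of \(T^n\) in \(\log Z_\CC(T)\), where \(n = [K:k]\). Since \(Z_\CC = Z_{\CC'}\) as rational functions in \(T\), their formal logarithmic expansions agree coefficient-by-coefficient, so \(|\CC(K)| = |\CC'(K)|\) for every finite extension \(K \supseteq k\). No further work is required: the corollary really is a bookkeeping consequence of Proposition~\ref{prop:FMinvariance} and the definitions, and there is no serious obstacle to overcome.
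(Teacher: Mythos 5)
Your argument is exactly the one the paper intends: invoke Proposition~\ref{prop:FMinvariance} to get a Frobenius-equivariant isomorphism of Galois modules, conclude equality of the \(L\)-polynomials, and read off equality of zeta functions and hence of all point counts from the definitions. The paper compresses this to a single sentence, but the reasoning is identical, and your write-up is correct.
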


\subsection{Geometricity of K3 categories}
\label{subsec:geometricity}
Point-counting can help detect whether \(\CC\) is derived equivalent to a (twisted) K3 surface over \(k\). 

\begin{cor} \label{geometriccase}
Let \(k\) be a finite field. If \(\CC \cong \Db(S, \alpha)\) is a \(k\)-linear FM equivalence for some twisted K3 surface \((S, \alpha)\) defined over \(k\), then \(Z_{\CC} = Z_S\); in particular, \(|\CC(K)| = |S(K)|\) for any finite extension \(K \supseteq k\). 
\end{cor}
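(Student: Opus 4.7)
The plan is to reduce to the case $\CC = \Db(S,\alpha)$ and then compute the zeta function directly. First, by Proposition~\ref{prop:FMinvariance}, the $k$-linear Fourier--Mukai equivalence $\CC \cong \Db(S,\alpha)$ yields an isomorphism of Galois modules $\widetilde{H}(\CC) \cong \widetilde{H}(\Db(S,\alpha))$, and so the $L$-polynomials, and hence the zeta functions, coincide. Thus it suffices to prove the corollary when $\CC = \Db(S,\alpha)$.

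Next I would identify $\widetilde{H}(\Db(S,\alpha))$ with the classical Mukai realization
\[
\widetilde{H}(S) = \H^0_{\text{\'et}}(\overline{S},\Q_\ell) \oplus \H^2_{\text{\'et}}(\overline{S},\Q_\ell(1)) \oplus \H^4_{\text{\'et}}(\overline{S},\Q_\ell(2))
\]
as a Galois module. In the untwisted case $\Db(S,\alpha)=\Db(S)$ is its own ambient derived category, the projector $\iota_\CC\circ\pi_\CC$ is the identity, and the identification is immediate. In the twisted case I would realize $\Db(S,\alpha)$ as an admissible subcategory of the derived category of a Brauer--Severi variety $\pi\colon Y\to S$ split by $\alpha$ (or equivalently, via an Azumaya algebra), and argue that the cohomological projector cuts out precisely the Galois submodule $\pi^*\widetilde{H}(S) \subset \widetilde{H}(Y)$. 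The key point is that $\alpha$ affects only the Hodge filtration on the Mukai lattice, not the underlying $\ell$-adic Galois module or the Frobenius action.

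From here the computation is routine. Letting $P_2(T) = \det(\mathrm{Id} - F^* T \mid \H^2_{\text{\'et}}(\overline{S},\Q_\ell))$, the Tate twists in the Mukai decomposition give $L_{\widetilde{H}(S)}(qT) = (1-qT)^2 \cdot P_2(T)$, and substituting into the definition of $Z_\CC$ the factors of $(1-qT)^2$ cancel, leaving exactly the classical Weil zeta function $Z_S(T) = 1/\bigl((1-T)\,P_2(T)\,(1-q^2T)\bigr)$. Equality of these rational functions gives equality of all coefficients in the logarithmic expansion, and therefore $|\CC(K)| = |S(K)|$ for every finite extension $K \supseteq k$.

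The main obstacle will be the twisted case: carefully verifying that $\widetilde{H}(\Db(S,\alpha))$ actually recovers $\widetilde{H}(S)$ as a Galois submodule of some ambient Mukai structure requires working through the twisted cohomological Fourier--Mukai calculus and checking that the class $\alpha$ only changes Hodge- or $K$-theoretic data rather than the bare $\ell$-adic realization. The untwisted case is otherwise entirely formal and follows at once from Proposition~\ref{prop:FMinvariance}.
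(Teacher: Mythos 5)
Your proposal is correct and takes essentially the same route as the paper: invoke Proposition~\ref{prop:FMinvariance} to get $\widetilde{H}(\CC) \cong \widetilde{H}(\Db(S,\alpha))$, identify the latter with $\widetilde{H}(S)$ as Galois modules (the twist disappears over $\Q_\ell$), and then read off $Z_\CC = Z_S$. The paper dispatches both remaining steps in a single line --- asserting the identification ``since these are $\Q_\ell$-vector spaces'' and citing Honigs~\cite{honigs1} for the point-count recovery --- whereas you propose to make the first step explicit via the Brauer--Severi realization of $\Db(S,\alpha)$ and carry out the $L$-polynomial cancellation $L_{\widetilde{H}(S)}(qT) = (1-qT)^2 P_2(T)$ by hand; both are correct fillings-in of the same argument, and you have correctly flagged the twisted identification as the only step requiring care.
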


\begin{proof}
As in the proof of Proposition~\ref{prop:FMinvariance}, there is an isomorphism of Galois modules \(\widetilde{H}(\CC) \cong \widetilde{H}(S, \alpha)\), and since these are \(\Q_\ell\)-vector spaces we further have an isomorphism of Galois modules \(\widetilde{H}(S, \alpha) = \widetilde{H}(S)\), from which one recovers the point counts of \(S\)~\cite{honigs1}.
\end{proof}

The main application of the above corollary is to obstruct
geometricity of the Calabi--Yau category \(\CC\): if the point counts are nonintegral or negative, then \(\CC\) cannot be derived equivalent over \(k\) to a (twisted) K3 surface.

\section{The K3 category of a cubic fourfold}\label{sec:cubics}

In the case where \(X\) is a cubic fourfold defined over a finite
field \(k\) with \(q\) elements, and \(\AA_X \hookrightarrow \Db(X)\)
is the admissible embedding of its K3 category  into its derived category, we wish to study the point counts of \(\AA_X\).

Let \(L_4(T)=\det(\text{Id} - T F^* \mid
H^4_{\text{\'et}}(\overline{X}, \Q_\ell(2)) ) \) denote the
\(L\)-polynomial on the middle cohomology of $X$, which is a
polynomial of degree 23 whose roots have complex absolute value 1. Let
\(L_{4,pr}(T)= L_4(T)/(1-T)\) denote the factor corresponding to the
primitive cohomology. Then we have
\[Z_{\AA_X}(T) = \frac{1-qT}{(1-T)L_4(qT)(1-q^2T)} =\frac{1}{(1-T)L_{4,pr}(qT)(1-q^2T)}.\]
We remark that this gives the following formula for the point counts of $\AA_X$ in
terms of the point counts of $X$
\[
|\AA_X(\F_{q^n})| = \frac{|X(\F_{q^n})| - 1 - q^{2n} -q^{4n}}{q^n}
\]
and inversely, a formula for the point counts of $X$ in terms of those
for $\AA_X$
\begin{equation}
\label{eq:cubic_fourfold_point_counts}
|X(\F_{q^n})| =1 +q^n|\AA_X(\F_{q^n})| + q^{2n}   + q^{4n} 
\end{equation}
These formulae, together with Chevalley--Warning--Ax theorem~\cite{ax}, which
gives that $|X(\F_q)| \equiv 1 \pmod q$, imply the following.

\begin{lemma}
Let \(\AA_X \hookrightarrow \Db(X)\) be the K3 category of a cubic
fourfold $X$ over a finite field \(k\), and let \(K/k \) be a finite extension. Then the point count \(|\AA_X(K)|\) is an integer. $\qed$
\end{lemma}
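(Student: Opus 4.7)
The plan is to read off the integrality of $|\AA_X(K)|$ directly from the formula \eqref{eq:cubic_fourfold_point_counts}. Writing $K = \F_{q^n}$ and inverting that identity gives
\[
|\AA_X(\F_{q^n})| = \frac{|X(\F_{q^n})| - 1 - q^{2n} - q^{4n}}{q^n},
\]
so the task reduces to showing that the numerator is divisible by $q^n$ as an integer. The two monomial terms $q^{2n}$ and $q^{4n}$ are visibly divisible by $q^n$, so all that remains is the congruence $|X(\F_{q^n})| \equiv 1 \pmod{q^n}$.

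For this I would invoke the Chevalley--Warning--Ax theorem, applied to $X$ viewed as a smooth cubic fourfold over $\F_{q^n}$. Concretely, $X \subset \P^5$ is cut out by a single degree $3$ form, and the affine cone $\hat{X} \subset \A^6$ is defined by the same cubic; Ax's theorem then yields $|\hat{X}(\F_{q^n})| \equiv 0 \pmod{q^n}$ since $\lceil (6-3)/3\rceil = 1$. Using $|\hat{X}(\F_{q^n})| = 1 + (q^n - 1)\,|X(\F_{q^n})|$ and reducing modulo $q^n$ gives $|X(\F_{q^n})| \equiv 1 \pmod{q^n}$, which is precisely what the authors quote just before the lemma.

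Combining the two steps, the numerator of the displayed expression is divisible by $q^n$, so $|\AA_X(\F_{q^n})|$ is an integer. There is no real obstacle here; the only care needed is to apply Chevalley--Warning--Ax over the extension field $\F_{q^n}$ rather than over $\F_q$, and to track the passage from the affine cone to projective point counts.
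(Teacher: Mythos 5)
Your argument is the same one the paper intends (the formula for $|\AA_X(\F_{q^n})|$ in terms of $|X(\F_{q^n})|$ combined with the congruence $|X(\F_{q^n})|\equiv 1 \pmod{q^n}$ from Chevalley--Warning--Ax); you have simply filled in the affine-cone reduction that the paper leaves as a citation to Ax. Correct.
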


\begin{remark}
Li, Pertusi, and Zhao~\cite{LPZ} show, at least over the complex numbers,
that any exact equivalence of K3 categories of cubic fourfolds is
Fourier--Mukai. If one could show this over finite fields, then
consequently the point counts of \(\AA_X\) would be independent of the
admissible embedding. The missing ingredient over a finite field is
the nonemptiness of a certain moduli space of objects in the K3 category, but this would take us too far afield from our primary focus.
\end{remark}

\subsection{Negative point counts and obstructions to geometricity}

In the database~\cite{github}, we have examples of cubic fourfolds
over \(\F_2\) whose K3 categories have some negative point counts \(|\AA_X(\F_2)| < 0\). This could be indicative of some associated K3 defined over some larger base extension, or it could indicate that there is no associated K3 even geometrically. In any case, we can definitively say that these cubics have no associated K3 defined over \(\F_2\).

\begin{comp}\label{comp:F2negptcounts}
There are \(2662\) cubic fourfolds up to isomorphism over \(\F_2\) for which \(|\AA_X(K)|<0\) for some finite extension of \(K\), and hence do not have associated K3 over the field \(\F_2\). 
\end{comp}

In fact, all but one of these cubics is Noether--Lefschetz special, demonstrating that negative point counts can be used to exhibit explicit special cubic fourfolds with no associated K3 over \(\F_2\). 

\begin{comp} \label{negptcountsonvgeneral}
There is a unique $\F_2$-isomorphism class of smooth cubic fourfold
$X$ defined over \(\F_2\) that is geometrically Noether--Lefschetz general and whose
K3 category \(\AA_X\) has negative point counts, represented by
\[
x_1^2x_2 + x_1^2x_6 + x_1x_2x_6 + x_1x_3x_5 + x_1x_4^2 + 
    x_1x_5^2 + x_1x_6^2 + x_2^3 + x_2^2x_5 + x_2^2x_6 \]
   \[+ x_2x_3x_4 + x_2x_5^2 + x_3^3 + x_3x_6^2 + x_4^2x_5 + 
    x_4x_6^2 + x_6^3.
\]
\end{comp}

\medskip

\begin{remark}
There are 436 among those from Computation~\ref{comp:F2negptcounts} which are ordinary. The cubic fourfold of Computation~\ref{negptcountsonvgeneral} is a non-ordinary cubic fourfold (of height~7).

\end{remark}

\begin{remark}
 We also found that there are a handful of cubics for which the \(\F_2\)-point counts of \(\AA_X\) are positive but for which \(|\AA_X(\F_{2^m})| <0\) for some \(m > 1\). 
\end{remark}

Another natural condition on the point counts for a K3 surface \(S\),
considered by Kedlaya and Sutherland~\cite{kedlaya_sutherland-census}, is that \(|S(\F_{q^{mn}})| \ge |S(\F_{q^n})| \) for all \(m , n \ge 1\).
\medskip 

This provides, for a cubic fourfold \(X\), an obstruction to geometricity over \(k\).

\begin{comp} There are 2343 cubic fourfolds over \(\F_2\) that have \(|\AA_X(\F_{2^k})| >0\) for all \(k \ge 1\) but \(|\AA_X(\F_{2^{mn}})| < |\AA_X(\F_{2^{n}})|\) for some \(m,n \ge 1\), and hence do not have associated K3 over \(\F_2\).
\end{comp}

\begin{remark} 
There are 1084 cubic fourfolds in the above computation which are
ordinary. 
\end{remark}

\subsection{Hilbert schemes and Fano varieties}

For \(X\) a smooth cubic fourfold defined over the finite field \(k=\F_q\) and for any finite extension \(\F_{q^n} \supset k\), the point counts of \(X\) determine the point counts of its Fano variety of lines \(F_1(X)\). Precisely, we have~(\cite[Corollary 5.2]{galkin_shinder},~\cite[Equation 8]{debarre_laface_roulleau}):

\begin{equation}\label{fanocount}
|F_1(X)(\F_{q^n}) | = \frac{|X(\F_{q^n})|^2 -2(1+q^{4n})|X(\F_{q^n})| +|X(\F_{q^{2n}})| } {2q^{2n}}. 
\end{equation}

On the other hand, if \(S^{[2]}\) is the Hilbert scheme of length two subscheme on a K3 surface we have a formula for its point counts, which was first used by G\"{o}ttsche in~\cite{gottsche}:
\[
|S^{[2]}(\F_{q^n})| = \binom{|S(\F_{q^n})|}{2} + (q^n+1)|S(\F_{q^n})| + \frac{|S(\F_{q^{2n}})| - |S(\F_{q^n})|}{2}
\]

Ganter and Kapranov~\cite{ganter_kapranov} defined the symmetric
square $\CC^{[2]}$ of any $k$-linear triangulated (or dg-)category
$\CC$.  When $S$ is a smooth projective surface, one has a $k$-linear
equivalence $\Db(S)^{[2]} \to \Db(S^{[2]})$, see
\cite[Chapter~7,~Remark~3.28(i)]{huybrechtscubicbook}.  This formally
motivates the following definition of point counts of the Hilbert square
\(\AA_X^{[2]}\) of the K3 category (or any noncommutative K3) 
\begin{equation}
\label{eq:hilbert_square_point_counts}
|\AA_X^{[2]}(\F_{q^n})| := \binom{|\AA_X(\F_{q^n})|}{2} + (q^n+1)|\AA_X(\F_{q^n})| + \frac{|\AA_X(\F_{q^{2n}})| - |\AA_X(\F_{q^n})|}{2}
\end{equation}
as well as the corresponding zeta function $Z_{\AA_X^{[2]}}(T)$.

\begin{prop}\label{prop:noncommhilb}
Let \(X\) be a cubic fourfold over a finite field \(k\). Then there is a equality of zeta functions
\[
Z_{\AA_X^{[2]}} (T) = Z_{F_1(X)}(T)
\]
\end{prop}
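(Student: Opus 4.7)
The plan is to prove the equality by showing that the sequences of point counts agree at every finite extension; since a zeta function of the form appearing in Section~\ref{subsec:noncommutative_surface} is formally determined by its logarithmic derivative, and the same is true classically for $Z_{F_1(X)}$, it suffices to establish
\[
\abs{\AA_X^{[2]}(\F_{q^n})} = \abs{F_1(X)(\F_{q^n})}
\qquad \text{for all } n \geq 1.
\]
Both sides will be expressed as the same polynomial in $a_n := \abs{\AA_X(\F_{q^n})}$, $a_{2n} := \abs{\AA_X(\F_{q^{2n}})}$, and $q^n$, and the proof reduces to elementary algebra.

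First I would start on the Fano side. Using formula~\eqref{eq:cubic_fourfold_point_counts} I write $\abs{X(\F_{q^n})} = 1 + q^n a_n + q^{2n} + q^{4n}$, so that
\[
\abs{X(\F_{q^n})} - (1+q^{4n}) = q^n a_n + q^{2n}.
\]
Substituting into~\eqref{fanocount}, the numerator becomes
\[
\bigl(\abs{X(\F_{q^n})} - (1+q^{4n})\bigr)^2 - (1+q^{4n})^2 + \abs{X(\F_{q^{2n}})},
\]
and after expanding $\abs{X(\F_{q^{2n}})} = 1 + q^{2n} a_{2n} + q^{4n} + q^{8n}$, the terms $-(1+q^{4n})^2$ and $1 + q^{4n} + q^{8n}$ combine to cancel all constants and high powers of $q$. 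What remains is $q^{2n} a_n^2 + 2 q^{3n} a_n + q^{2n} a_{2n}$, so dividing by $2q^{2n}$ yields
\[
\abs{F_1(X)(\F_{q^n})} = \frac{a_n^2}{2} + q^n a_n + \frac{a_{2n}}{2}.
\]

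Next I would expand the right-hand side of the definition~\eqref{eq:hilbert_square_point_counts} of $\abs{\AA_X^{[2]}(\F_{q^n})}$. Writing $\binom{a_n}{2} = \frac{a_n^2 - a_n}{2}$, the term $(q^n+1)a_n$ contributes $q^n a_n + a_n$, and $\frac{a_{2n} - a_n}{2}$ contributes $\frac{a_{2n}}{2} - \frac{a_n}{2}$. Collecting, the linear-in-$a_n$ terms $-\frac{a_n}{2} + a_n - \frac{a_n}{2}$ sum to zero, and one is left with exactly $\frac{a_n^2}{2} + q^n a_n + \frac{a_{2n}}{2}$, matching the Fano computation.

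Finally, I conclude that $\abs{\AA_X^{[2]}(\F_{q^n})} = \abs{F_1(X)(\F_{q^n})}$ for all $n$, hence $\log Z_{\AA_X^{[2]}}(T) = \log Z_{F_1(X)}(T)$ as formal power series and the two zeta functions coincide. There is no real obstacle here: the argument is a direct substitution combined with the observation that the four explicit formulas~\eqref{fanocount}, \eqref{eq:cubic_fourfold_point_counts}, and \eqref{eq:hilbert_square_point_counts} were tailored precisely so that this identity should hold. The only mild subtlety is that the intermediate expressions involve denominators of $2$ and $q^{2n}$, so one must check that the simplification indeed returns an integer-valued polynomial in $a_n$, $a_{2n}$, $q^n$, which it does.
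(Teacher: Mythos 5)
Your proof is correct and takes essentially the same approach as the paper: a direct formal verification that $\abs{\AA_X^{[2]}(\F_{q^n})} = \abs{F_1(X)(\F_{q^n})}$ for all $n$, which determines the zeta functions. The only cosmetic difference is that the paper parameterizes via the $22$ Frobenius eigenvalues $\alpha_j$, whereas you parameterize directly via $a_n$, $a_{2n}$, and $q^n$ using~\eqref{eq:cubic_fourfold_point_counts}, which is a slightly more economical bookkeeping of the same computation.
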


\begin{proof}
By the Weil conjectures, there are algebraic integers \(\alpha_1, \ldots, \alpha_{22}\) such that
\begin{equation}\label{eq:Xq}
|X(\F_{q^n})|  = 1 + q^n + \sum_{j=1}^{22}\alpha_j^n + q^{2n} + q^{3n}
+ q^{4n},
\end{equation} 
\[|X(\F_{q^{2n}})|  = 1 + q^{2n} + \sum_{j=1}^{22}\alpha_j^{2n} + q^{4n} + q^{6n} + q^{8n}.\] 
For the K3 category, one has 
\[|\AA_X(\F_{q^n})| = 1 + \sum_{j=1}^{22} \left( \frac{\alpha_j}{q}\right)^n + q^{2n}, \]
\[|\AA_X(\F_{q^{2n}})| = 1 + \sum_{j=1}^{22} \left( \frac{\alpha_j}{q}\right)^{2n} + q^{4n}. \]
Evaluating these expressions into Equations~\ref{fanocount}
and~\ref{eq:hilbert_square_point_counts}, one find formally that
\[ 
|\AA_X^{[2]}(\F_{q^n})| = |F_1(X)(\F_{q^n})| 
\]
for all $n \geq 1$ and thereby giving the equality of zeta functions.

\end{proof}

\begin{remark}
\label{rem:Hilbert_square}
The construction of the symmetric square implies that \(\AA_X^{[2]}
\subset \Db(X^{[2]})\) is an admissible subcategory, where $X^{[2]}$
is the Hilbert scheme of length $2$ subschemes on $X$, see
\cite[Chapter~7,~Remark~3.28(ii)]{huybrechtscubicbook}.  Work of
Belmans, Fu, and Raedschelders~\cite[Theorem~B]{bfr} (cf.\
\cite[Section~8]{huybrechtsupdate}) presents both $\AA_X^{[2]}$ and
$\Db(F_1(X))$ as pieces of a semiorthogonal decomposition of
$\Db(X^{[2]})$ with all other components equivalent to $\Db(X)$, from which
one can also deduce the equality of
\(L\)-polynomials \(L_{\widetilde{H}(F_1(X))}(T) = L_{\widetilde {H}(\AA_X^{[2]})}(T) \), thereby providing a different
proof of Proposition~\ref{prop:noncommhilb}.

It would be interesting to consider an appropriate Grothendieck ring of
noncommutative varieties, which is a generalization of both the
Grothendieck ring of $k$-varieties and the Grothendieck ring of differential graded $k$-linear
categories, in which the formula (in analogy with
work of Galkin and Shinder~\cite[Theorem~5.1]{galkin_shinder}, see \cite[Chapter~7,~Remark~3.28(ii)]{huybrechtscubicbook})
\[
[X^{[2]}] = \mathbb{L}^2\, [\AA_X^{[2]}] + [\P^4][X]
\]
could reside, where $\mathbb{L} = [\mathbb{A}^1]$.  One can check directly, in analogy with the proof of
Proposition~\ref{prop:noncommhilb}, that if $k$ is a finite field, then
\[
|\AA_X^{[2]}(k)| = \frac{|X^{[2]}(k)| - |\P^4(k)||X(k)|}{q^2},
\]
an equality that would follow from the existence of an appropriate
point counting motivic measure on this Grothendieck ring.  One could
also ask whether the equality $[F_1(X)] = [\AA_X^{[2]}]$ holds in this
ring (and not just after inverting $\mathbb{L}$).  

Finally, whether there is actually an equivalence
$\AA_X^{[2]} \isom \Db(F_1(X))$, which would immediately imply Proposition~\ref{prop:noncommhilb}, is a conjecture attributed to Galkin,
and still open in general, see
\cite[Chapter~7,~Remark~3.28(iii)]{huybrechtscubicbook} and
\cite[Section~8]{huybrechtsupdate}.
\end{remark}

\begin{remark}
 Frei~\cite{frei} has already shown that point counts of moduli spaces
 of stable sheaves on K3 surfaces only depend on the dimension of the
 moduli space and the underlying K3. Li, Pertusi, and
 Zhao~\cite{LPZfano} have shown that \(F_1(X)\) is a moduli space of
 Bridgeland stable objects in $\AA_X$. Therefore, our
 Proposition~\ref{prop:noncommhilb} is evidence that Frei's result
 should extend to the noncommutative setting: the point count of a
 moduli space of Bridgeland stable objects on a noncommutative K3
 surface should only depend on the category and the dimension of the
 moduli space. 
\end{remark}

\section{Towards a Honda--Tate for K3 surfaces}
\label{sec:Honda-Tate}

Kedlaya and Sutherland~\cite{kedlaya_sutherland-census} have initiated
a program that could be called \linedef{Honda--Tate for K3 surfaces},
in analogy with the classical Honda--Tate theorem \cite{honda,tate}
for abelian varieties over a finite field.  In this section, we give
some further details of what this program entails and provide a few
additional observations, including how varieties ``of K3-type'' (such
as cubic fourfolds) might play a role.

For context, we will review the classical Honda--Tate theorem in the
case of abelian varieties.  For an abelian variety $A$ of dimension
$g$ over $\F_q$, the zeta function $\zeta_A(T)$ is completely
determined by, and determines, the characteristic polynomial
$\Phi_A(T)$ of Frobenius on $H^1_{\text{\'et}}(\overline{A},\Q_\ell)$
for any $\ell$ prime to $q$.  In this case, $\Phi_A(T)$ is a Weil
polynomial of degree $2g$, all of whose roots have absolute value
$q^{1/2}$ and must satisfy other arithmetic conditions.  Isogenous
abelian varieties have the same zeta function, and the map from
the set of isogeny classes of abelian varieties of dimension $g$ to the set of Weil polynomials is injective by a result of Tate~\cite{tate}.  The
description of the image of the map, i.e., the set of Weil polynomials
realized by abelian varieties of dimension $g$, is a result of
Honda~\cite{honda}.

For a K3 surface $S$ over $\F_q$, the zeta function is given by
$$
\zeta_S(T) = \frac{1}{(1-T)L_S(qT)(1-q^2t)}
$$ 
where $L_S(T)$ is the $L$-polynomial of Frobenius acting
$H^2_{\text{\'et}}(\overline{S},\Q_\ell(1))$ for any $\ell$ prime to
$q$.  In this case, $L_S(T)$ is a Weil polynomial of degree $22$, all
of whose roots have absolute value $1$ and must satisfy other
constraints, see Theorem~\ref{thm:Taelman_conditions}.

There is a classical notion of \linedef{isogeny} of K3 surfaces $S$
and $S'$ over $\C$, namely, an isometry
$\varphi \colon H^2(S(\C),\Q) \to H^2(S'(\C),\Q)$ of rational Hodge
structures with intersection pairing.  Various authors have proposed
notions of isogeny between K3 surfaces over a finite field, e.g.,
\cite{bragg_yang} and \cite{yang}, which would produce
an isometry
$\varphi \colon H^2_{\text{\'et}}(\overline{S},\Q_\ell) \to
H^2_{\text{\'et}}(\overline{S}',\Q_\ell)$ of Galois modules with
intersection pairing for all $\ell$ prime to $q$.  In particular,
isogenous K3 surfaces should have the same Weil polynomial.
Honda--Tate for K3 surfaces then consists of the following two problems.

\begin{prob}[Honda--Tate for K3s]
$\phantom{1}$
\begin{enumerate}
\item ``Tate for K3s''
Determine whether the map from isogeny classes of K3 surfaces over $\F_q$ to Weil
 polynomials is injective.

\item ``Honda for K3s''
Determine the Weil polynomials that arise from K3 surfaces over $\F_q$.
\end{enumerate}
\end{prob}

Tate for K3s is likely known to the experts and should follow from the
semisimplicity of Frobenius acting on $\ell$-adic cohomology, whereas
Honda for K3s seems to be wide open, and is the subject of Kedlaya and
Sutherland's computational work~\cite{kedlaya_sutherland-census}, as
well as work by Taelman~\cite{taelman} and Ito~\cite{ito}.  In
particular, Kedlaya and Sutherland describe an algorithm to generate a
list of all polynomials that could potentially arise as Weil
polynomials of K3 surfaces over a fixed finite field $\F_q$, though it is
still unknown whether all such polynomials do arise from K3 surfaces
defined over $\F_q$.  On the other hand, Taelman and Ito prove
that under mild hypotheses, every Weil polynomial on such a list is
realized by a K3 surface defined over an extension of $\F_q$.

From the Weil conjectures and properties of crystalline cohomology,
the Weil polynomials $L_S(T)$ must satisfy the following arithmetic
constraints, see \cite{kedlaya_sutherland-census,ito,taelman}.

\begin{theorem}
\label{thm:Taelman_conditions}
Let $S$ be a K3 surface over $\F_q$.  Then the Weil polynomial
$L_S(T) \in \Q[T]$ is a degree 22 polynomial, all of whose roots have
complex absolute value 1, and satisfies:
\begin{enumerate}
\item \textit{Projectivity.} $L_S(T)$ has a factor of $1-T$.

\item \textit{Weil conjectures.} $L_S(T) \in \Z_\ell[T]$ for all
  $\ell$ prime to $q$.

\item \textit{Crystalline.} Factor $L_S(T) =
  L_{S,\text{alg}}(T) L_{S,\text{trc}}(T)$ where $L_{S,\text{alg}}(T)$
  is the maximal factor all of whose roots are roots of unity.  Then
  either $S$ is supersingular, in which case $L_S(T) =
  L_{S,\text{alg}}(T)$, or $L_{S,\text{trc}}(T)$ satisfies the following two properties:
\begin{enumerate}
\item \textit{Newton above Hodge.} The Newton polygon of
  $L_{S,\text{trc}}(T) \in \Q_p(T)$ lies above the Hodge polygon of the crystalline transcendental lattice of $S$.

\item \textit{Transcedental.} $L_{S,\text{trc}}(T) = Q^e$ for some
  $e > 0$ and $Q \in \Q[T]$ irreducible, where $Q = Q_{<0} Q_{\geq 0}$
  in $\Q_p[T]$ where $Q_{<0}$ is irreducible and consists of all
  factors of $Q$ of the form $(1-\gamma T)$ with $v_p(\gamma) <0$.
\end{enumerate}
We note that when $q = p$, the conditions $\mathrm{(a)}$ and $\mathrm{(b)}$ are together equivalent to $pL_S(T) \in \Z_p[T]$.

\item \textit{Nonnegative point counts.} $L_S(T)$ is consistent with
  $|S(\F_{q^n})| \geq 0$ for all $n \geq 1$.

\item \textit{Field extension growth.} $L_S(T)$ is consistent with
  $|S(\F_{q^m})| \geq |S(\F_{q^n})|$ for all $m, n \geq 1$ with $n|m$.

\item \textit{Artin--Tate.} Writing $L_S(T) = (1-T)^r L_1(T)$ with
  $L_1(1) \neq 0$, we have $qL_1(-1)$ is a square (possibly
  0).
\end{enumerate} 
\end{theorem}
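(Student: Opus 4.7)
The plan is to verify each of the six conditions separately, drawing on distinct ingredients from $\ell$-adic and crystalline cohomology, together with two deep theorems now known for K3 surfaces: the Tate conjecture (Nygaard--Ogus, Maulik, Charles, Madapusi-Pera) and the Artin--Tate conjecture (Milne). Throughout, I would closely follow the treatments in \cite{kedlaya_sutherland-census}, \cite{taelman}, and \cite{ito}, which already compile many of these pieces.

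First I would dispatch (2) via Deligne's Weil II: the characteristic polynomial of geometric Frobenius on $H^2_{\text{\'et}}(\overline S,\Q_\ell)$ has integer coefficients independent of $\ell$, with all roots of complex absolute value $q$; twisting by $\Q_\ell(1)$ yields (2) and the unit-circle statement. Condition (1) follows because the cycle class of a hyperplane section on $S$ is Frobenius-invariant in $H^2_{\text{\'et}}(\overline S, \Q_\ell(1))$, producing the factor $(1-T)$. For (4) and (5), the Grothendieck--Lefschetz trace formula gives
\[
|S(\F_{q^n})| \;=\; 1 + q^{2n} + q^n \sum_{i=1}^{22} \alpha_i^n,
\]
where the $\alpha_i$ are the reciprocal roots of $L_S(T)$; (4) is the nonnegativity of a cardinality, and (5) is immediate from $S(\F_{q^n}) \subseteq S(\F_{q^m})$ for $n \mid m$.

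The main obstacle is (3), which brings in crystalline cohomology. By Katz--Messing, the characteristic polynomial of crystalline Frobenius on $H^2_{\text{crys}}(S/W(\F_q))$ agrees with $L_S(T)$ up to the Tate twist. The Tate conjecture for K3 surfaces yields a splitting of $F$-isocrystals $H^2 = H^2_{\text{alg}} \oplus H^2_{\text{trc}}$ corresponding to the N\'eron--Severi sublattice and its orthogonal complement; the algebraic piece consists of cycles defined over some $\F_{q^n}$, so $L_{S,\text{alg}}$ has roots of unity. In the non-supersingular case, Mazur's Newton-above-Hodge theorem applied to the transcendental crystal---whose Hodge numbers are $h^{0,2} = h^{2,0} = 1$ and $h^{1,1}_{\text{trc}} = 20-r$ with $r$ the N\'eron--Severi rank---gives (a). For (b), semisimplicity of Frobenius together with irreducibility of the transcendental $F$-isocrystal (a consequence of Tate and the Hodge-theoretic simplicity of the transcendental lattice) shows $L_{S,\text{trc}} = Q^e$ for an irreducible $Q \in \Q[T]$; the $p$-adic factorization $Q = Q_{<0} Q_{\geq 0}$ with $Q_{<0}$ irreducible is extracted from the Newton--Hodge polygon, which forces the slopes of negative $p$-adic valuation to form a single Galois orbit. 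The parenthetical equivalence with $pL_S(T) \in \Z_p[T]$ when $q = p$ is then read off directly from the slope bounds.

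Finally, for (6), I would apply the Artin--Tate formula, unconditional for K3 surfaces since Tate is known; it expresses $L_1(1)$ in terms of $|\mathrm{Br}(S)|$, the discriminant of $\mathrm{NS}(S)$, and a regulator. Combining the functional equation for $L_S$ with the fact that $\mathrm{Br}(S)$ carries a nondegenerate alternating form, and hence has square order (the $p$-part being handled via flat cohomology), yields that $qL_1(-1)$ is a square. The delicate part here is bookkeeping the signs, torsion contributions, and powers of $q$ in the functional equation; I would defer to the explicit manipulation carried out in \cite{kedlaya_sutherland-census} and \cite{taelman} to complete this step.
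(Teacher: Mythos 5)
The paper does not actually prove Theorem~\ref{thm:Taelman_conditions}; it is stated as a compilation of known results and the reader is referred to \cite{kedlaya_sutherland-census}, \cite{ito}, and \cite{taelman}. So there is no in-paper proof to compare against, and your job is really to reconstruct the arguments from those sources. Your sketch does this correctly in outline for most of the items: (1) and (2) are standard consequences of the cycle class map and Deligne's Weil II, (4) and (5) are the elementary trace-formula observations, and for (6) you correctly identify the Artin--Tate formula (unconditional for K3s by the Tate conjecture together with Milne's work, cf.~\cite{milne-zeta}) together with the square-order of $\mathrm{Br}(S)$ as the engine, deferring the sign and normalization bookkeeping to \cite{taelman} and \cite{kedlaya_sutherland-census}, which is reasonable for a sketch.

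The substantive gap is in your treatment of condition (3)(b). First, you claim irreducibility of the ``transcendental $F$-isocrystal'' and deduce $L_{S,\text{trc}} = Q^e$; but irreducibility of the representation would force $e = 1$, which the statement does not assert. What is actually needed is that the transcendental $\Q_\ell[\text{Frob}]$-module is \emph{isotypic}, i.e., a sum of copies of a single simple module. This is a nontrivial input (a theorem of Zarhin for K3 surfaces over finite fields), not a formal consequence of ``semisimplicity plus Hodge-theoretic simplicity'' --- indeed over $\F_q$ there is no Hodge structure to invoke, and the complex-geometric simplicity of the transcendental lattice is not directly available. Second, and more seriously, your claim that the $p$-adic irreducibility of $Q_{<0}$ ``is extracted from the Newton--Hodge polygon, which forces the slopes of negative $p$-adic valuation to form a single Galois orbit'' is not correct: a single Newton segment of slope $-1/h$ and horizontal length $h$ is perfectly compatible with $Q_{<0}$ factoring into several $\Q_p$-irreducible pieces, so the polygon alone imposes no Galois-orbit constraint. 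The actual argument (as in Taelman) identifies the slope $<0$ part of the transcendental $F$-isocrystal with the (twisted) Dieudonn\'e module of the formal Brauer group $\widehat{\mathrm{Br}}(S)$, which for a non-supersingular K3 is a \emph{connected} one-dimensional $p$-divisible group of height $h$; one-dimensionality is what forces the characteristic polynomial of Frobenius on that piece to be $\Q_p$-irreducible. Without this crystalline input from the formal group your proof of (3)(b) does not go through, and the parenthetical equivalence with $pL_S(T) \in \Z_p[T]$ when $q = p$ (which you only assert) also needs this structure to be verified.
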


We say that a degree $22$ Weil polynomial satisfying these conditions
is of \linedef{K3 type} over $\F_q$.    One reasonable
question is whether these conditions provide a complete solution to Honda
for K3s. To test this, Kedlaya and Sutherland in
\cite[Computation~3]{kedlaya_sutherland-census} enumerated the entire
list of Weil polynomials of K3 type\footnote{The
degree 21 polynomial \(L(T) \in \Z[T]\) appearing in
\cite{kedlaya_sutherland-census} is expressed in terms of our degree
22 Weil polynomial as $L(T)=q L_S(T/q)/(1-T)$.} over $\F_2$, finding
1,672,565 such polynomials. They compare this list to the Weil
polynomials of quartic K3 surfaces over \(\F_2\), which account for
only about \(3\%\) of all Weil polynomials of K3 type for \(q=2\).  It
would be a natural next step to create a census K3 surfaces of other
low degree over $\F_2$.

\smallskip

One could also consider a ``Honda--Tate for noncommutative K3s'' for
the larger class of noncommutative K3 surfaces $\CC$ over a finite
field $\F_q$ together with their Weil polynomials $L_\CC(T)$
introduced in Section~\ref{subsec:noncommutative_surface}.  In this
context, formulating ``Tate for noncommutative K3s'' would require a
notion of isogeny between noncommutative K3 surfaces, whose details
are not entirely clear, but should at least imply that isogenous
noncommutative K3 surfaces have the same Weil polynomials.  In order
to resolve ``Honda for noncommutative K3s'' one must formulate a
reasonable list of necessary properties satisfied by Weil polynomials
of noncommutative K3s, and then ask whether these properties are
sufficient.

As a concrete example, we now list some known conditions on the Weil
polynomials of K3 categories of cubic fourfolds, in the case $q=p$ for
simplicity.  To that end, we introduce some formal point counts
associated to a noncommutative K3 surface.  In analogy with
\eqref{eq:cubic_fourfold_point_counts}, we define the \linedef{cubic
fourfold point count} of $\CC$ as
\[
|X_\CC(\F_{q^n})| = 1 +q^n|\AA_X(\F_{q^n})| + q^{2n}   + q^{4n} 
\] 
and in analogy with \eqref{eq:hilbert_square_point_counts} we define
the \linedef{Hilbert square point count} of $\CC$ as
\[
|\CC^{[2]}(\F_{q^n})| = \binom{|\CC(\F_{q^n})|}{2} + (q^n+1)|\CC(\F_{q^n})| + \frac{|\CC(\F_{q^{2n}})| - |\CC(\F_{q^n})|}{2}
\]

\begin{prop}
\label{prop:noncommutative_Taelman}
Let $\CC$ be the K3 category of a cubic fourfold over $\F_p$.
Then the Weil polynomial $L_\CC(T) \in \Q[T]$ is a degree 22
polynomial, all of whose roots have complex absolute value 1, and
satisfies:
\begin{enumerate} 
\item Weil Conjectures and Crystalline: $pL_\CC(T) \in \Z[T]$.

\item \textit{Nonnegative cubic fourfold point counts.} $L_\CC(T)$ is consistent with
  $|X_\CC(\F_{p^n})| \geq 0$ for all $n \geq 1$.

\item \textit{Cubic fourfold field extension growth.} $L_\CC(T)$ is consistent with
  $|X_\CC(\F_{p^m})| \geq |X_\CC(\F_{q^n})|$ for all $m, n \geq 1$ with $n|m$.

\item \textit{Nonnegative Hilbert square point counts.} $L_\CC(T)$ is consistent with
  $|\CC^{[2]}(\F_{p^n})| \geq 0$ for all $n \geq 1$.

\item \textit{Hilbert square field extension growth.} $L_\CC(T)$ is consistent with
  $|\CC^{[2]}(\F_{p^m})| \geq |\CC^{[2]}(\F_{p^n})|$ for all $m, n \geq 1$ with $n|m$.
  
 \item Artin--Tate. Writing $L_\CC(T) = (1-T)^r L_1(T)$ with
  $L_1(1) \neq 0$, we have $pL_1(-1)$ is a square (possibly
  0).
\end{enumerate}
\end{prop}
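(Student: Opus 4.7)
The plan is to deduce every condition from the identity
\[
L_\CC(T) = L_{4,\mathrm{pr}}(T),
\]
obtained by comparing the explicit formula $Z_{\AA_X}(T) = 1/((1-T) L_{4,\mathrm{pr}}(qT) (1-q^2T))$ from Section~\ref{sec:cubics} against the general expression for $Z_\CC(T)$ in terms of $L_{\widetilde{H}(\CC)}(T)$.  Thus $L_\CC(T)$ is the degree-$22$, weight-zero Weil polynomial attached to primitive $H^4_{\text{\'et}}(\overline{X},\Q_\ell(2))$, and the degree and absolute-value-$1$ root conditions are immediate from the Weil conjectures for the smooth cubic fourfold $X$.

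The four point-count conditions \textbf{(2)}--\textbf{(5)} will be formal consequences of geometric identifications.  By construction of the cubic fourfold point count we have $|X_\CC(\F_{p^n})|=|X(\F_{p^n})|$, and by Proposition~\ref{prop:noncommhilb} we have $|\CC^{[2]}(\F_{p^n})|=|F_1(X)(\F_{p^n})|$, where $F_1(X)$ is the Fano variety of lines.  Nonnegativity and field extension growth in each case then follow from the corresponding elementary facts for the rational point counts of the smooth projective varieties $X$ and $F_1(X)$.

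For the Weil--crystalline integrality \textbf{(1)}, I would apply Mazur's Newton-above-Hodge theorem to the $F$-crystal $H^4_{\mathrm{cris}}(X/W(\F_p))$.  Writing $P_{4,\mathrm{pr}}(T)=\sum_{i=0}^{22}c_iT^i \in \Z[T]$ for the untwisted primitive Weil polynomial, one has $L_\CC(T)=P_{4,\mathrm{pr}}(T/p^2)$, so the $i$-th coefficient of $pL_\CC(T)$ equals $c_i/p^{2i-1}$.  Since the primitive Hodge numbers of a cubic fourfold are $h^{1,3}_{\mathrm{pr}}=h^{3,1}_{\mathrm{pr}}=1$ and $h^{2,2}_{\mathrm{pr}}=20$, the Hodge polygon of primitive $H^4$ at the abscissa $i$ has height exactly $2i-1$ for $1\le i\le 21$ (and $44$ at $i=22$), forcing $v_p(c_i)\ge 2i-1$ and hence $pL_\CC(T)\in\Z_p[T]$.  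Combined with the trivial $\ell$-adic integrality for $\ell\ne p$, inherited directly from $P_{4,\mathrm{pr}}\in\Z[T]$, this yields $pL_\CC(T)\in\Z[T]$.

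The Artin--Tate square condition \textbf{(6)} is the main obstacle.  My plan is to use the Beauville--Donagi isomorphism $H^4_{\mathrm{pr}}(\overline{X},\Q_\ell(2))\cong H^2_{\mathrm{pr}}(\overline{F_1(X)},\Q_\ell(1))$ of Galois modules to rewrite $L_\CC(T)$ as the primitive Weil polynomial of $H^2$ on the hyperk\"ahler fourfold $F_1(X)$ of K3 type, and then appeal to the Artin--Tate conjecture for codimension-one cycles on $F_1(X)$, which holds as a consequence of the Tate conjecture for $H^2$ of hyperk\"ahler varieties of K3 type in positive characteristic (by work of Charles, Madapusi Pera, and others).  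The delicate point is translating the square-class information at $T=1$ that appears naturally in the Artin--Tate formalism into the square-class statement at $T=-1$ required by \textbf{(6)}, which I expect to be handled by the same Poincar\'e-duality functional equation argument controlling the analogous K3 surface case in Theorem~\ref{thm:Taelman_conditions}.
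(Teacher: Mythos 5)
Your plan for parts (2)--(5) is essentially the paper's argument: $|X_\CC(\F_{p^n})| = |X(\F_{p^n})|$ by the defining formula, and $|\CC^{[2]}(\F_{p^n})| = |F_1(X)(\F_{p^n})|$ by Proposition~\ref{prop:noncommhilb}, so nonnegativity and field-extension growth follow from the analogous properties of point counts of the actual smooth projective varieties $X$ and $F_1(X)$.

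For part (1), your argument via Mazur's Newton-above-Hodge theorem on $H^4_{\mathrm{cris}}$ and the primitive Hodge numbers $(1,20,1)$ is correct and gives the $p$-integrality bound $v_p(c_i)\ge 2i-1$ explicitly; the paper instead simply treats this as inherited from the known integrality properties of the cubic fourfold Weil polynomial (which is the same content, just cited rather than rederived). The $\ell$-adic integrality for $\ell\ne p$ is clear in both accounts.

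The genuine gap is in part (6), and you acknowledge it yourself. Your proposed route---transport $L_\CC$ to $H^2_{\mathrm{pr}}$ of $F_1(X)$ via Beauville--Donagi and invoke an ``Artin--Tate conjecture for codimension-one cycles on a hyperk\"ahler fourfold of K3 type''---is not a theorem you can cite, and the Artin--Tate formula in dimension four does not isolate $H^2$ the way it does for surfaces; it couples all of the cohomology, so extracting a clean square-class statement for $L_{2,\mathrm{pr}}(-1)$ requires real work you have not supplied. The paper avoids all of this: since $\AA_X$ is the right orthogonal to an exceptional collection, $L_{\AA_X}(T)$ differs from the Tate-twisted middle-degree $L$-polynomial of $X$ only by factors of $(1-T)$, and the required square-class constraint is exactly the content of the functional-equation sign result of Elsenhans--Jahnel (\cite[Theorem~1.9]{elsenhans_jahnel:duke}) for smooth cubic hypersurfaces over $\F_p$ with $p>2$, supplemented by \cite[Theorem~4.6]{cubiccensus} for $p=2$. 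You should replace your speculative hyperk\"ahler detour with this direct citation; as written, (6) is not established.
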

\begin{proof}
Parts (1), (2), (3) come from being a factor of the cubic fourfold
Weil polynomial.  Parts (4) and (5) comes from
Proposition~\ref{prop:noncommhilb}.  Part (6) comes from the fact that $\AA_X$ is the left orthogonal to a
exceptional collection, so that $L_{\AA_X}(T)$ differs from $L_X(T)$
by factors of $(1-T)$, and then applying \cite[Theorem~1.9]{elsenhans_jahnel:duke} for $p >2$ or \cite[Theorem~4.6]{cubiccensus} for $p=2$. 
\end{proof}

As in the case for K3 surfaces, for a noncommutative K3 surface $\CC$,
we write $L_\CC(T) = L_{\CC,\text{alg}}(T) L_{\CC,\text{trc}}(T)$
where $L_{\CC,\text{alg}}(T)$ is the maximal factor all of whose roots
are roots of unity.  For a K3 surface $S$, the Tate conjecture (which
are proved in \cite{maulik}, \cite{charles1}, \cite{charles2},
\cite{madapusipera}, \cite{kim_madapusipera},
\cite{madapusipera-erratum}, \cite{ito_ito_koshikawa})) implies that
the geometric Picard rank of $S$ equals the degree of
$L_{S\text{alg}}$ and the arithmetic Picard rank (i.e., Picard rank
over $\F_q$) of $S$ equals the multiplicity of $(1-T)$ in
$L_{S,\text{alg}}$.  One important difference in the noncommutative
setting is that $L_{\CC,\text{alg}}(T)$ may equal $1$; we call such
noncommutative K3 surfaces ``purely transcendental.''  Hence by
analogy, purely transcendental noncommutative K3 surfaces have
``geometric Picard rank 0.'' For $q=2$, we can collect some statistics
about the distribution of the \linedef{arithmetic Picard rank} $\rho$
(i.e., the multiplicity in $L_\CC(T)$ of the root 1) and
\linedef{geometric Picard rank} $\overline{\rho}$ (i.e., the degree of
$L_{\CC,\text{alg}}(T)$) among Weil polynomials of degree 22 over
$\F_q$ that satisfy a minimal set of necessary conditions to be
potentially realizable by a noncommutative K3 surface.

\begin{comp}
There are 5,478,058 degree 22 Weil polynomials satisfying the ``Weil
conjecture and crystalline'' condition from
Proposition~\ref{prop:noncommutative_Taelman} over $\F_2$.  Of these,
we record the distribution of (geometric) Picard ranks:
\[
\renewcommand{\arraystretch}{1.1}
\hspace*{-0.8cm}
\begin{array}{c|cccccc}
\overline{\rho} &0 & 2 & 4 & 6 & 8 & 10 \\
\#              &74846 & 242700 & 441072 & 697944 & 762944 & 936736\\[4pt] \hline \\[-9pt]
\overline{\rho} & 12& 14& 16 & 18 & 22 & 22 \\
\# & 775320 & 651600 & 442308 & 270180 & 122128 & 60280 \\[-13pt] 
    \end{array}
\]
\bigskip
\\
\[
\renewcommand{\arraystretch}{1.1}
\hspace*{-0.62cm}
\begin{array}{c|cccccc}
\rho &0 & 1 & 2 & 3 & 4 & 5 \\
\#   &2506876 & 956904 & 956904 & 349118 & 349118 & 121936\\[4pt] \hline \\[-9pt]
\rho &6& 7& 8 & 9 & 10 & 11 \\
\# & 121936 & 40194 & 40194 & 12574 & 12574 & 3612 \\[4pt]
 \hline \\[-9pt]
\rho &12& 13& 14& 15 & 16& 17 \\
\# & 3612 & 966 & 966 & 230 & 230 & 48 \\[4pt]
 \hline \\[-9pt]
\rho &18& 19& 20&21 & 22 \\
\# & 48 & 8 & 8 & 1 & 1 \\[-13pt] 
    \end{array}
\]
\end{comp}
\bigskip

We also remark that of the 74,846 purely transcendental Weil polynomials
above, only 4,294 come from K3 categories of a cubic fourfolds over
\(\F_2\).  

\begin{remark}
Since roots of $L_{\CC,\text{trc}}(T)$ must come in complex
conjugate pairs, the possible values of $\overline{\rho}$ are even and
that for every odd number $n = 1,\dotsc,21$, the number of Weil
polynomials with $\rho = n$ and $\rho= n+1$ is the same.  This is the
same reason why the Tate conjecture implies the well-known fact that
the geometric Picard rank of a K3 surface over a finite field must be
even. 
\end{remark}

\section{The zeta function of a nonadmissible cubic fourfold}\label{sec:nonadmissible}
The goal of this section is give a particular example of a Noether--Lefschetz special cubic fourfold \(X\) over \(\Q\) with nongeometric K3 category. The example suggests that the zeta function alone cannot be used to detect nongeometric K3 categories.  

\begin{theorem}\label{thm:specialexample}
There exists a cubic fourfold \(X/\Z\) such that
\begin{enumerate}
\item \(X_\Q\) has no associated K3 and no associated twisted K3 surface over \(\C\) (i.e., \(X_\Q\) is not admissible nor twisted-admissible over \(\C\));
\item \(X\) has good reduction at \(p=2\) and the specialization \(\CH^2(X_{\overline{\Q}}) \to \CH^2(X_{\overline{\F}_2}) \) is an isomorphism; and 
\item the point counts of \(\AA_{X_{\F_2}}\) satisfy the
conditions of Theorem~\ref{thm:Taelman_conditions}.
\end{enumerate}
\end{theorem}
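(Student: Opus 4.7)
The plan is constructive: exhibit an explicit cubic fourfold $X/\Z$ meeting all three conditions. First, I would search for an integer cubic $X \subset \P^5_{\Z}$ that is Noether--Lefschetz special over $\overline{\Q}$, with transcendental/algebraic decomposition of $H^4(X_\C,\Z(2))$ having discriminant $d$ failing simultaneously Hassett's admissibility condition and Addington's twisted-admissibility condition. Small ``doubly nonadmissible'' discriminants (e.g.\ $d=12$ or $d=20$) are realized by cubic fourfolds containing standard surfaces such as a Veronese surface or a quartic scroll; these auxiliary surfaces can be arranged to have defining equations over $\Z$, and a generic one-parameter deformation keeping the corresponding Hassett divisor has geometric Picard rank exactly two and lands in the non(twisted)-admissible locus.

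Once a candidate equation is chosen, I would verify good reduction at $2$ by checking smoothness of $X_{\F_2}$ (a purely combinatorial Gr\"obner-basis calculation on the Jacobian ideal), and then address condition (2) via an arithmetic Picard rank computation. Flat specialization gives an injection $\CH^2(X_{\overline{\Q}}) \hookrightarrow \CH^2(X_{\overline{\F}_2})$, so it suffices to match the lower bound from lifting with an upper bound coming from point counting: the multiplicity of $1$ as a root of the characteristic polynomial of Frobenius on $H^4(X_{\overline{\F}_2},\Q_\ell(2))$ bounds the rank of algebraic classes from above, via the Tate conjecture for cubic fourfolds (reducible to the now-settled case of K3 surfaces, together with the fact that $\CH^2$ injects into $H^4$ for cubic fourfolds). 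So one counts $|X(\F_{2^n})|$ for enough $n$ to determine $L_4(T)$, reads off the multiplicity of the root $1$, and confirms it agrees with the lower bound of $2$ coming from the hyperplane class and the lifted special surface.

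Condition (3) is then handled using the same point-counting data. From $L_4(T)$ one extracts $L_{\AA_X}(T)$ via the relation \eqref{eq:cubic_fourfold_point_counts}, and each property listed in Theorem~\ref{thm:Taelman_conditions} is a direct numerical check: the Weil and crystalline conditions follow from $\AA_{X_{\F_2}}$ sitting inside the $H^4$ of a smooth proper $\Z_2$-scheme; nonnegative point counts and field extension growth are verified for a finite list of $n$ and then extended to all $n$ by the Archimedean bound on the eigenvalues; the Newton--Hodge inequality and transcendental factorization conditions are checked $p$-adically; and the Artin--Tate square is confirmed by evaluating the relevant factor at $T=-1$.

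The main obstacle is condition (2). The injectivity part of the specialization is automatic, but forcing the reduction not to acquire \emph{extra} algebraic cycles is subtle, since special cubics with good reduction at small primes frequently pick up additional classes in characteristic $p$. The example must therefore be engineered so that the computed $L$-polynomial over $\F_2$ shows precisely the expected two eigenvalues equal to $q = 2$ and no more; in practice this requires iterating the search across many candidate cubics realizing the chosen discriminant $d$, computing their zeta functions, and discarding those in which the geometric Picard rank jumps on reduction.
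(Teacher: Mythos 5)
Your overall strategy matches the paper's (explicit integral cubic, Huybrechts/Addington--Thomas criterion for (twisted) geometricity, smoothness check mod $2$, point counting over $\F_2$ to pin down $L_4(T)$ and hence the Picard rank via the Tate conjecture, then a direct verification of the Taelman conditions). However, there is a genuine gap in your treatment of condition (2), and it is not one that more searching can fix.

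You aim for a cubic of geometric Picard rank exactly $2$ (a very general point of a single Hassett divisor $\mathcal{C}_d$ with $d \in \{12,20\}$), and then expect to find a reduction mod $2$ with $\rk\CH^2(X_{\overline{\F}_2}) = 2$. This is impossible. Under the Tate conjecture for cubic fourfolds over finite fields (which your argument assumes), $\rk\CH^2(X_{\overline{\F}_p}) = 1 + \deg L_{\mathrm{alg}}$, where $L_{\mathrm{alg}}$ is the maximal factor of the degree-$22$ primitive $L$-polynomial whose roots are roots of unity. The complementary transcendental factor has no real roots (its roots lie on the unit circle and are not $\pm1$), so it has even degree, forcing $\deg L_{\mathrm{alg}}$ to be even and $\rk\CH^2(X_{\overline{\F}_p})$ to be \emph{odd}. (This is exactly the same parity argument showing that K3 surfaces over $\overline{\F}_p$ have even geometric Picard rank; the paper records it in the final remark of Section~\ref{sec:Honda-Tate}.) Consequently the specialization map $\CH^2(X_{\overline{\Q}}) \to \CH^2(X_{\overline{\F}_2})$ from a rank-$2$ cubic can never be surjective; ``the geometric Picard rank jumps on reduction'' is not a sporadic failure mode to be filtered out by iterating the search but a structural obstruction. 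The ``lower bound of $2$'' from $h^2$ and one lifted surface can never match the point-count upper bound, which is always at least $3$.

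The paper works around this by taking a very general cubic in the \emph{intersection} $\mathcal{C}_{12} \cap \mathcal{C}_{20}$, giving geometric Picard rank $3$, generated by $h^2$, a cubic scroll $T$, and a Veronese $V$ with $T.V=2$. One then needs the additional input of Proposition~\ref{prop:noadmissiblemarkings}: for that particular rank-$3$ lattice, every rank-$2$ primitive sublattice containing $h^2$ has discriminant of the form $12y(y-z)+20z^2$, which is never twisted-admissible. This is a nontrivial lattice-theoretic step (it relies on a case analysis in the spirit of Yang--Yu's classification) that your proposal does not address; once $\rk\CH^2=3$, nongeometricity over $\C$ is no longer the single-discriminant check you describe, and you must rule out all markings at once. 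A small additional slip: you list a ``quartic scroll'' among the surfaces realizing $d=12$ or $d=20$; a quartic scroll gives $d=14$, which is admissible (the Pfaffian divisor). The surfaces used are a cubic scroll ($d=12$) and a Veronese ($d=20$).
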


In light of the description of the point counts of
\(\AA_{X_{\F_2}}\) above, it is still plausible that \(
Z_{\AA_{X_{\F_2}}}(T)\) is the zeta function of an actual
K3 surface over \(\F_2\).  Indeed, the main result of
Taelman~\cite{taelman} and the computational evidence
in~\cite{kedlaya_sutherland-census} suggest that if \(\AA_X\) has
point counts resembling a K3 surface then \(Z_{\AA_X}(T)\) should be equal to the zeta function of an actual K3 surface over \(k\). Therefore, the results of this section can be taken as evidence that point-counting on a K3 category cannot alone distinguish geometric K3 categories from nongeometric ones. 

\subsection{A special cubic fourfold over $\Q$ with no (twisted) admissible markings}\label{ss:noadmissiblemarkings}

Let $d$ be a positive integer congruent to $0$ or $2$ modulo $6$. We say that $d$ is an {\it admissible discriminant} if $d$ satisfies the condition
\[
4 \nmid d, 9 \nmid d, \text{ and } p \nmid d \text{ for any odd prime } p \equiv 2 \mod 3 \ \ (\star \star).
\]
We say that $d$ is a {\it twisted admissible discriminant} if there is some integer $k$ such that 
\[d=k^2d_0 \text{ for some admissible discriminant $d_0$} \ \ (\star \star '). \]

For a cubic fourfold \(X\) over the complex numbers,  Huybrechts~\cite{huybrechts} has shown, following closely-related work of Addington and Thomas~\cite{addington_thomas}, that $\CH^2(X)$ contains a primitive rank $2$ sublattice $K$ of {\it (twisted) admissible discriminant} $d$ with $c_1(\mathcal{O}_X(1) )^2 \in K$ if and only if $\AA_X \cong \Db(S, \alpha)$ for some twisted K3 surface \((S, \alpha)\). We in turn say that the primitive sublattice $K$ is a {\it (twisted) admissible} sublattice.

\begin{prop}\label{prop:noadmissiblemarkings}
If $X$ is a cubic fourfold $X/\C$ with \(\rk\,\CH^2(X) = 3\) and \(X\) contains a Veronese surface $V$ and a cubic scroll $T$ such that $T.V=2$, then $\AA_X$ is nongeometric; that is, $\AA_X$ is not equivalent to the derived category of a (twisted) K3 surface.
\end{prop}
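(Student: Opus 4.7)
\medskip

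The plan is to apply Huybrechts's criterion recalled just above the statement: $\AA_X$ is twisted-geometric if and only if $\CH^2(X)$ contains a primitive rank-$2$ sublattice $K \ni h^2$ of (twisted) admissible discriminant.  I will show instead that the $2$-adic valuation $v_2(\disc K)$ is \emph{even} for every such $K$, which contradicts twisted admissibility and so forces nongeometricity.

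First I would pin down the intersection form on $L := \langle h^2, V, T\rangle \subseteq A := \CH^2(X)$.  The Veronese surface satisfies $h^2\cdot V = \deg V = 4$ and $V^2 = 12$ in any smooth cubic fourfold containing it (recovering Hassett's discriminant-$20$ locus), and similarly the cubic scroll satisfies $h^2\cdot T = 3$ and $T^2 = 7$ (discriminant $12$). Together with $(h^2)^2 = 3$ and the hypothesis $V\cdot T = 2$, this fixes
\[
G_L \;=\; \begin{pmatrix} 3 & 4 & 3 \\ 4 & 12 & 2 \\ 3 & 2 & 7 \end{pmatrix}, \qquad \det G_L = 68 = 2^2\cdot 17.
\]
Since $\rk A = 3 = \rk L$ and $[A:L]^2 \mid 68$, either $A = L$ or $[A:L] = 2$.

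Next I would convert admissibility into a $2$-adic obstruction.  If $\disc K = k^2 d_0$ with $d_0$ satisfying $(\star\star)$, then $d_0$ is even but $4 \nmid d_0$, so $v_2(d_0) = 1$ and $v_2(\disc K) = 2v_2(k) + 1$ is odd.  Thus it suffices to exhibit $v_2(\disc K) \in \{0,2,4,\dots\}$ in every case.

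For any $w \in A$, the sublattice $\langle h^2, w\rangle$ has discriminant $d(w) := 3w^2 - (h^2\cdot w)^2$, and because $(h^2)^2 = 3$ this quantity depends only on $w$ mod $\Z h^2$.  A primitive (saturated) $K \ni h^2$ corresponds to a primitive class $\bar w \in A/\Z h^2$, and $\disc K = d(w)$ for any lift.

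\emph{Case $A = L$:} write $\bar w = a\bar V + b\bar T$ with $\gcd(a,b) = 1$.  Expanding $d(w)$ using $G_L$ gives
\[
d(w) \;=\; 20a^2 - 12ab + 12b^2 \;=\; 4\bigl(5a^2 - 3ab + 3b^2\bigr),
\]
and $5a^2 - 3ab + 3b^2 \equiv a^2 + ab + b^2 \equiv 1 \pmod 2$ whenever $\gcd(a,b) = 1$, so $v_2(d(w)) = 2$.

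\emph{Case $[A:L] = 2$:} integrality of the intersection form on $A$ forces the nontrivial coset of $A/L$ to be represented by exactly one of $\overline{V/2}$, $\overline{(h^2+T)/2}$, $\overline{(h^2+V+T)/2}$.  In each of the three scenarios I would parametrize the primitive classes in $A/\Z h^2$ and, mimicking the computation above, obtain the binary quadratic forms $12b^2 - 6bd + 5d^2$, $20a^2 - 6ad + 3d^2$, $12b^2 + 6bd + 5d^2$ respectively.  In each, a mod-$4$ check (extract the factor of $4$ that appears when the even variable is written as $2d'$ and verify the quotient is odd for coprime arguments) yields $v_2 \in \{0,2\}$.

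Combining the two cases, $v_2(\disc K)$ is always even, contradicting twisted admissibility, so $\AA_X$ is not equivalent to $\Db(S,\alpha)$ for any twisted K3 surface $(S,\alpha)$.  The main obstacle is the case $[A:L] = 2$: ruling out this extension requires a finite but nontrivial enumeration of glue classes consistent with integrality of the pairing, and a separate mod-$4$ verification in each.  The underlying reason the strategy succeeds is that the specific intersection numbers of the Veronese and the cubic scroll — together with $V\cdot T = 2$ — conspire to make $\det G_L$ a perfect square times $17$, the latter being precisely the ``wrong'' $2$-adic shape to produce an admissible discriminant.
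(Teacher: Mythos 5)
Your argument is correct and follows essentially the same route as the paper: both pin down the Gram matrix of $L := \langle h^2, V, T\rangle$, reduce twisted (in)admissibility to a $2$-adic statement about the discriminant $3w^2 - (h^2\cdot w)^2$ of a rank-$2$ saturated sublattice through $h^2$, and verify that the resulting binary quadratic form never has the right $2$-adic shape. Your packaging of the obstruction as ``$v_2$ of a twisted admissible discriminant is odd'' is a clean reformulation of the paper's observation that $d/4 = 3y(y-z)+5z^2$ is never $\equiv 0, 2 \pmod 6$ (both arguments boil down to $y^2 - yz + z^2$ being odd for coprime $y,z$).

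The one genuine difference is that you explicitly address the possibility that $L$ might be a proper finite-index sublattice of $A := \CH^2(X)$. You correctly compute $\det G_L = 68 = 2^2 \cdot 17$, so $[A:L] \in \{1,2\}$, and you are right that integrality singles out exactly the three even overlattices generated by $V/2$, $(h^2+T)/2$, $(h^2+V+T)/2$. I verified your three resulting binary forms (up to a choice of basis of $A/\Z h^2$: in the third case your $12b^2 + 6bd + 5d^2$ corresponds to the basis $\{\bar T, \bar u\}$ with $u = (h^2+V+T)/2$), and in each the mod-$4$ case analysis does give $v_2 \in \{0,2\}$, ruling out twisted admissibility. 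The paper's direct computation for the twisted case is stated only for $L$ itself (``the discriminant of any rank $2$ primitive sublattice containing $h^2$ of the rank $3$ lattice above is generically of the form $12y(y-z)+20z^2$''), so your overlattice check is a welcome additional layer of rigor. Two small things worth making explicit: (i) $h^2$ remains primitive in each overlattice $A$ (since $h^2/2 \notin A^\vee$ because $(h^2/2)\cdot h^2 = 3/2 \notin \Z$), so your parametrization of saturated rank-$2$ sublattices through $h^2$ by primitive vectors in $A/\Z h^2$ is justified; and (ii) the discriminant $d(w) = 3w^2 - (h^2\cdot w)^2$ does compute the discriminant of the saturation $K$, not merely of $\langle h^2, w\rangle$, precisely because $\bar w$ primitive in $A/\Z h^2$ makes $\Z h^2 + \Z w$ saturated. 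You implicitly use both facts; neither is hard, but a referee would want them stated.
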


\begin{proof}
Let $h = c_1(\mathcal{O}_X(1))$. By the hypotheses, the cubic fourfold $X$ contains a cubic scroll, so $X$ has a rank $2$ marking of discriminant $12$, and a discriminant $20$ marking as well because it contains a Veronese~\cite{hassett}. Since $T.V=2$,  the lattice $\langle h^2, T, V\rangle \subset \CH^2(X)$ has Gram matrix
\[ \begin{pmatrix}
3 & 4 & 3 \\
4 & 12 & 2          \\
3 & 2 & 7
\end{pmatrix}
\]
We note that in~\cite[\S 8]{yang_yu}, Yang and Yu give a classification of rank $3$ positive definite lattices $M$ that can contain (twisted) admissible primitive sublattices. As a consequence of their classification, $M$ has {\it no admissible} sublattices. We wish to show the stronger claim that $M$ has no {\it twisted admissible} primitive sublattices. Indeed, the discriminant of any rank $2$ primitive sublattice containing $h^2$ of the rank $3$ lattice above is generically of the form $d=12y(y-z) + 20z^2$ for integers $y$ and $z$ and so cannot be admissible (since $4 \mid d$), and further more cannot be twisted admissible: if $d$ were twisted admissible, then $d/4=3y(y-z)+5z^2$ would need to be of the form $s^2d_0$ for some admissible $d_0$, so in particular we would need $d/4 \equiv 0$ or $2$ modulo $6$, but $3y(y-z) + 5z^2$ never represents these congruence classes.
\end{proof}

\begin{remark}
In Proposition~\ref{prop:noadmissiblemarkings}, we are using a very
general cubic fourfold contained in the intersection $\mathcal{C}_{12}
\cap \mathcal{C}_{20}$ of Hassett divisors in the moduli space of
cubic fourfolds.  In fact, it appears that $\mathcal{C}_{12}
\cap \mathcal{C}_{20}$ has six irreducible components, determined by the
possible intersection numbers $T.V = -1, \dotsc, 4$.  We utilize the
component where $T.V=2$, and recently, the geometry of the component
with $T.V = 4$ has been investigated in \cite{prieto}.   
\end{remark}

\begin{proof}[Proof of Theorem~\ref{thm:specialexample}]
Let \(X/\Z\) be the cubic fourfold with equation

\noindent{\tiny
\[-27195x_1^3 + 99309x_1^2x_2 + 52143x_1^2x_3 - 19299x_1^2x_4 + 17717x_1^2x_5 - 166089x_1^2x_6 + 280203x_1x_2^2 + 42138x_1x_2x_3 - 24486x_1x_2x_4 + 335080x_1x_2x_5 \]
\[+ 36287x_1x_2x_6 - 52038x_1x_3^2 + 42628x_1x_3x_4 - 91243x_1x_3x_5 + 76026x_1x_3x_6 
+ 74191x_1x_4^2 + 105644x_1x_4x_5 - 206488x_1x_4x_6 \] \[- 21765x_1x_5^2 - 396946x_1x_5x_6 - 145953x_1x_6^2 + 153699x_2^3 - 17064x_2^2x_3 - 12246x_2^2x_4 + 317363x_2^2x_5 + 202376x_2^2x_6 - 45743x_2x_3^2 + 76777x_2x_3x_4 \]\[- 160450x_2x_3x_5 - 622x_2x_3x_6 
+ 102045x_2x_4^2 + 105638x_2x_4x_5 - 206876x_2x_4x_6 + 104046x_2x_5^2 - 97682x_2x_5x_6 \]\[+ 48677x_2x_6^2 + 27090x_3^3 + 54944x_3^2x_4 - 93628x_3^2x_5- 11594x_3^2x_6 + 27854x_3x_4^2 - 13462x_3x_4x_5 + 97190x_3x_4x_6 - 149681x_3x_5^2 
- 126562x_3x_5x_6 \]\[+ 44296x_3x_6^2 + 74191x_4^2x_5 + 102045x_4^2x_6 + 97089x_4x_5^2 - 76364x_4x_5x_6 - 194630x_4x_6^2 - 52559x_5^3 - 376318x_5^2x_6 - 296287x_5x_6^2 x_1 = 0.\]
}

\noindent We wish to show that \(X\) has the claimed properties (1)
and (2). We first show that \(X_\Q\) contains surfaces \(T\) and \(V\) as in the statement of Proposition~\ref{prop:noadmissiblemarkings}. The cubic \(X\) contains the cubic scroll \(T\) given by the simultaneous vanishing of the \(2 \times 2\) minors of the matrix

\[
 \begin{pmatrix} 
 x_5 & x_2 + x_3+x_6 \\
 x_5+x_6 & x_2+x_6 \\
 x_3+x_4+x_6 & x_5 
 \end{pmatrix},\]
 inside of the hyperplane \(x_1+x_2+x_5+x_6=0\).
 One also computes that \(X\) contains the Veronese surface \(V\) 
 given by the vanishing of the minors of the matrix 
 \[
 \begin{pmatrix}
 x_3+x_5 & x_1+x_2+x_4 & x_3+x_4\\
 x_1+x_2+x_4 & x_1+x_4+x_6 & x_3 + x_6 \\
 x_3 + x_4 & x_3+x_6 & x_1+x_2+x_5+x_6 
 \end{pmatrix}
 \]
The Magma code~\cite{Magma} in the arXiv distribution of this article
verifies these claims and the claim that \(T.V = 2\). We thus know that \(\rk \CH^2(X_\Q) \ge 3\). 

We next verify using Magma that the reduction \(X_{\F_2} \) is
smooth. Now, for any \(\ell \ne 2\), we can compute via the point
counting algorithm of~\cite[Section~4.2]{cubiccensus} that the
primitive Weil-polynomial \(f(t) = \det(F^*-t\mathrm{Id} |
H^4_\text{\'et,pr}(X_{\overline{\F}_2}, \Q_\ell(2)) ) \) of $X_{\F_2}$, is given by
\begin{align*}
f(t) = &{} t^{22} - t^{21} + t^{20} - \frac{3}{2}t^{19} + t^{18} - \frac{3}{2}t^{17} + \frac{3}{2}t^{16} - t^{15} + 2t^{14} - 2t^{13} + \frac{3}{2}t^{12}\\
&{} - 2t^{11} + \frac{3}{2}t^{10} - 2t^9 + 
    2t^8 - t^7 + \frac{3}{2}t^6 - \frac{3}{2}t^5 + t^4 -
    \frac{3}{2}t^3 + t^2 - t + 1.
\end{align*}
This Weil polynomial factors as \(f(t) = (t-1)^2g(t)\) for an
irreducible, noncyclotomic polynomial of degree 20 over $\Q$. It
follows from the Tate conjecture for cubic fourfolds over
 \(\F_2\)~(see \cite[Section~4.6]{cubiccensus}) that \( \rk
 \CH^2(X_{\overline{\F}_2}) = 3\). By the specialization
 theorem for Chow groups~\cite{fulton}, cf.\ \cite[\S2]{addington_auel}, one has 
\[
3=\rk \CH^2(X_{\overline{\F}_2})  \ge \rk \CH^2(X_{\overline{\Q}} ).
\]
Rigidity for Chow groups, together with the fact that they are
torsion-free, shows that extension of scalars yields an isometry
$\CH^2(X_{\overline{\Q}}) \isom \CH^2(X_{\C})$.  This implies that the
latter group has rank $3$, and that by
Proposition~\ref{prop:noadmissiblemarkings}, we have that \(X_\Q\) has
no twisted associated K3 surfaces (over \(\C\)).
 
It remains to check the that point counts of the K3 category of
\(X_{\F_2}\) are positive and exhibit expected growth. By the
formula~\eqref{eq:Xq}, it is enough to confirm the point count growth
properties over \(\F_{2^k}\), \(k =1, \ldots, 4\) (cf.\
\cite{kedlaya_sutherland-census}), but we present a bit more:

 \begin{center}
\begin{tabular}{c|ccccccccccc}
\(k\) &1&2&3&4&5&6&7&8&9&10&11\\
\(|\AA_{X_{\F_2}}(\F_{2^k})|\)&
7&
13&
85&
273&
1137 &
4081 &
16289 &
64001 &
264001 &
1052673 &
4196353\\
\end{tabular}
\end{center}

Hence the point counts of \(\AA_{X_{\F_2}}\) satisfy (3).
\end{proof}

\begin{remark} To find the example we provided above, we used our complete tabulation of zeta functions of cubic fourfolds over \(\F_2\) from~\cite{cubiccensus} to find a cubic with the desired algebraic and geometric rank of 3 and with discriminant $68$ according to the (conjectural) Artin-Tate formula for fourfolds~~\cite{lichtenbaum}. We then verified that this cubic \(X'\) over \(\F_2\) contains a configuration of a Veronese and a cubic scroll intersecting in two points. Finally, we found a smooth lift \(X/\Q\) containing the a Veronese and a scroll in the right configuration by first lifting the Veronese and scroll from \(\F_2\) first, and then searching for lifts of \(X'\) containing these two surfaces. 
\end{remark}

\begin{remark}\;~\;
\begin{enumerate} 
\item  Since one expects something like
Proposition~\ref{prop:noadmissiblemarkings} to hold in positive
characteristic, it is conceivable that the cubic fourfolds in
Theorem~\ref{thm:specialexample} have nongeometric K3 category over
\(\overline{\F}_2\).

\item However, the Weil polynomial of \(\AA_{X_{\F_2}}\) are of K3
type, and so appear on Kedlaya and Sutherland's list of potential Weil
polynomials of K3 surfaces over \(\F_2\)~\cite[Computation
  3(c)]{kedlaya_sutherland-census}.  This suggests that nongeometric
  K3 categories may have point counts identical to those of K3
  surfaces, or that additional restrictions on the Weil polynomials of
  K3 surfaces are needed for a complete Honda--Tate theory for K3 surfaces.
\end{enumerate}
\end{remark}

\end{document}